\newtheorem{theorem}{Theorem}
\newtheorem{lemma}[theorem]{Lemma}
\newtheorem{corollary}[theorem]{Corollary}
\theoremstyle{definition}
\newtheorem{remark}[theorem]{Remark}
\newcommand\noproof{\hfill$\Box$}
\renewcommand\le{\leqslant}
\renewcommand\ge{\geqslant}
\newcommand\eps{\varepsilon}
\newcommand\Ga{\Gamma}
\renewcommand\Pr{{\mathbb P}}
\newcommand\E{{\mathbb E}}
\newcommand\op{o_{\mathrm{p}}}
\newcommand\cF{\mathcal{F}}
\newcommand\bb[1]{\bigl(#1\bigr)}
\newcommand\Bb[1]{\Bigl(#1\Bigr)}
\newcommand\bigabs[1]{\bigl|#1\bigr|}
\newcommand\lrabs[1]{\left|#1\right|}
\newcommand\cE{\mathcal{E}}
\newcommand\cH{\mathcal{H}}
\newcommand\cG{\mathcal{G}}
\newcommand\cP{\mathcal{P}}
\newcommand\cM{\mathcal{M}}
\newcommand\cS{\mathcal{S}}
\newcommand\cD{\mathcal{D}}
\newcommand\cQ{\mathcal{Q}}
\newcommand\cT{{\mathcal{T}}}
\newcommand\pto{\overset{\mathrm{p}}{\to}}
\newcommand\ceil[1]{\lceil#1\rceil}
\newcommand\ds{{\bf d}}
\newcommand\dconf{d_{\mathrm{conf}}}
\newcommand\ini{i\hspace{0.1pt}n_i}
\newcommand\iri{i\hspace{0.05pt}r_i}
\newcommand\Gm{G^*}
\newcommand\pc{p_{\mathrm{c}}}
\newcommand\pairing{\pi}
\newcommand\vs{S}
\begin{document}
\title{An old approach to the giant component problem}
\author{B\'ela Bollob\'as%
\thanks{Department of Pure Mathematics and Mathematical Statistics,
Wilberforce Road, Cambridge CB3\thinspace0WB, UK; {\em and\/}
Department of Mathematical Sciences, University of Memphis, Memphis TN 38152,
USA; {\em and\/} London Institute for Mathematical Sciences, 35a South St.,
Mayfair, London W1K\thinspace2XF, UK.
E-mail: {\tt b.bollobas@dpmms.cam.ac.uk}.}
\thanks{Research supported in part by NSF grant DMS~1301614 and MULTIPLEX no.\ 317532.}
\and Oliver Riordan%
\thanks{Mathematical Institute, University of Oxford, 24--29 St Giles', Oxford OX1 3LB, UK.
E-mail: {\tt riordan@maths.ox.ac.uk}.}}
\date{October 4, 2012; revised February 15, 2015}
\maketitle

\begin{abstract}
In 1998, Molloy and Reed showed that, under suitable conditions, if a sequence $\ds_n$
of degree sequences converges to a probability distribution $D$, then the
proportion of vertices in the largest component of the
random graph associated to $\ds_n$
is asymptotically $\rho(D)$, where $\rho(D)$ is a constant defined by the
solution to certain equations that can be interpreted as the survival probability
of a branching process associated to $D$. There have been a number of papers
strengthening this result in various ways; here we prove a strong form
of the result (with exponential bounds on the probability of large deviations)
under minimal conditions. 
\end{abstract}

\section{Introduction and results}

By a {\em degree sequence} $\ds$ we mean a finite sequence $(d_1,\ldots,d_n)$ of
non-negative integers with even sum. The {\em length} $|\ds|$ of
$\ds=(d_i)_{i=1}^n$ is the number $n$ of terms, and the \emph{size}
$m(\ds)=\tfrac{1}{2}\sum_i d_i$ is half the sum of the terms.
We write $G_\ds$ for the \emph{random (simple)
graph with degree sequence $\ds$}, i.e., a graph with vertex set
$[n]=\{1,2,\ldots,n\}$ in which each vertex $i$ has degree $d_i$, chosen
uniformly at random from the set of all such graphs (assuming this set is
non-empty). As usual, in studying $G_\ds$ we also consider the corresponding
random \emph{configuration multigraph} $\Gm_\ds$, introduced
in~\cite{BB_config}, obtained by associating a set of $d_i$ stubs to each vertex
$i$, selecting a uniformly random pairing of the (disjoint) union of these sets,
and interpreting each paired pair of stubs as leading to an edge in the natural
way. Note that these graphs have $|\ds|$ vertices and $m(\ds)$ edges.

Let $\cD$ denote the set of probability distributions $D$ on the non-negative
integers with $0<\E(D)<\infty$. We usually write $D\in \cD$
as $D=(r_0,r_1,\ldots)$, where, abusing notation by also writing $D$ for a random
variable with distribution $D$,
$r_i=\Pr(D=i)$. One of the basic questions concerning
the random graph models just described is the following:
under what conditions does convergence of $\ds_n$ to $D$ imply that the
asymptotic behaviour of $G_{\ds_n}$ (or $\Gm_{\ds_n}$) is captured by $D$?
Here the behaviour we are interested in is the distribution of the component sizes,
and most particularly the number $L_1$ of vertices in the (a if there is a tie)
largest component.

Let
\[
 n_i(\ds) = \bigabs{\{j: d_j=i\}}
\]
denote the number of times a particular degree $i$ occurs in $\ds$,
so
\[
 m(\ds)=\frac{1}{2}\sum_{j=1}^{|\ds|} d_j = \frac{1}{2}\sum_{i=0}^\infty \ini(\ds).
\]
The basic assumptions made in all existing results of this type are that
\begin{equation}\label{dcn1}
  \lim_{n\to\infty} \frac{n_i(\ds_n)}{|\ds_n|} = r_i
\end{equation}
for each $i$, that
\begin{equation}\label{dcn2}
 \frac{m(\ds_n)}{|\ds_n|} \to \frac{\E(D)}{2} = \frac{1}{2}\sum_{i=0}^\infty \iri
\end{equation}
as $n\to\infty$, and of course that $|\ds_n|\to\infty$. (Often, one takes
$|\ds_n|=n$, which loses no generality.) We shall say that $\ds_n$
\emph{converges} to $D$, and write $\ds_n\to D$, if these conditions hold.

Condition \eqref{dcn1} ensures that $D$ captures the asymptotic proportion of
vertices of each fixed degree, and condition \eqref{dcn2} that the (rescaled)
number of edges is related to $D$ in the natural way. Note that if we write
$D_n$ for the distribution of a randomly chosen element of $\ds_n$ (i.e., the
degree of a random vertex of $G_{\ds_n}$), then \eqref{dcn1} asserts that $D_n$
converges to $D$ in distribution. Condition \eqref{dcn2} asserts that
$\E(D_n)\to \E(D)<\infty$, which (given \eqref{dcn1}) is equivalent to uniform
integrability of the $D_n$.

To see why \eqref{dcn2} is necessary, consider $\ds_n$ consisting of one vertex
of degree $n-1$ and $n-1$ of degree $1$, contrasted (for $n$ even) with $\ds_n'$ in which all
$n$ degrees are equal to $1$.  In both cases \eqref{dcn1} holds with $r_1=1$ and
all other $r_i=0$, but the component structures of $G_{\ds_n}$ and $G_{\ds_n'}$
are very different -- one is a star, and the other a matching.  (There is a
similar but less extreme difference between $\Gm_{\ds_n}$ and $\Gm_{\ds_n'}$.)

As usual, we write $L_i(G)$ for the number of vertices in the $i$th largest component
of a graph $G$. We also write $N_k(G)$ for the number of vertices in $k$-vertex components.
The next result involves constants $\rho_k(D)$ and $\rho(D)$ whose definitions we
postpone to Section~\ref{bp} (see \eqref{rhokdef}). In fact, $\rho(D)$ is the same
as the quantity $\eps_D$ appearing in~\cite{MolloyReed2}, although our definition of it is different.
We write $\pto$ to denote convergence in probability.

\begin{theorem}\label{th1}
Let $D$ be a probability distribution on the non-negative integers with $0<\E(D)<\infty$,
and let $\ds_n$ be a sequence of degree sequences converging to $D$ in the sense
that \eqref{dcn1} and \eqref{dcn2} hold and $|\ds_n|\to\infty$. Then 
\[
 N_k(G_{\ds_n})/|\ds_n| \pto \rho_k(D)
\]
for each fixed $k$. If $\Pr(D\ge 3)>0$, then we also have
\[
 L_1(G_{\ds_n})/|\ds_n| \pto \rho(D)
\]
and $L_2(G_{\ds_n})/|\ds_n|\pto 0$.

Furthermore, the same conclusions hold with $G_{\ds_n}$ replaced by $\Gm_{\ds_n}$.
\end{theorem}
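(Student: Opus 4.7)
The plan is to work first with the configuration multigraph $\Gm_{\ds_n}$, where explicit pairing calculations are tractable, and then transfer the conclusions to $G_{\ds_n}$. Throughout, the key tool is the comparison of the breadth-first exploration of a component with a Galton--Watson branching process whose root has distribution $D$ and whose non-root vertices have offspring distribution $\widehat{D}-1$, where $\widehat{D}$ is the size-biased version of $D$ given by $\Pr(\widehat{D}=k)=k r_k/\E(D)$. One expects $\rho_k(D)$ to be the probability that this process produces exactly $k$ vertices, and $\rho(D)$ to be the suitably normalised survival probability.

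For $N_k(\Gm_{\ds_n})/|\ds_n|\pto \rho_k(D)$ with $k$ fixed, I would first compute $\E(N_k(\Gm_{\ds_n}))/|\ds_n|$ by summing over rooted isomorphism types $H$ on $k$ vertices: the probability that the component of a uniformly random vertex in $\Gm_{\ds_n}$ is isomorphic to $H$ is an explicit product of pairing probabilities which, under \eqref{dcn1} and \eqref{dcn2}, converges to the corresponding branching-process probability; summing over $H$ gives $\rho_k(D)$. A parallel calculation of $\E(N_k^2)$, using that the finite-depth explorations from two uniformly chosen vertices are asymptotically independent, gives $\E(N_k^2)/|\ds_n|^2 \to \rho_k(D)^2$, so Chebyshev yields convergence in probability.

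For the giant component (assuming $\Pr(D\ge 3)>0$, which rules out the degenerate path/cycle regime where the theorem can fail), fix small $\eps>0$ and large $T=T(\eps)$, and call a vertex \emph{$T$-alive} if its exploration has not died out after $T$ rounds. A first-moment computation as before shows the fraction of $T$-alive vertices converges in probability to the survival-to-depth-$T$ probability of the branching process, which tends to $\rho(D)$ as $T\to\infty$. To conclude that these vertices all sit in a single component, I would use a sprinkling argument: by exponential growth in the supercritical regime, most $T$-alive vertices have explorations that either die out or grow to size $\ge n^{1/2+\delta}$, and any two such large partial explorations are joined by the remaining uniform pairing with probability $1-o(1)$. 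This yields uniqueness of the giant and hence $L_1/|\ds_n|\pto\rho(D)$ and $L_2/|\ds_n|\pto 0$.

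The main obstacle is the transfer from $\Gm_{\ds_n}$ to $G_{\ds_n}$: since only $\E(D)<\infty$ is assumed, $\Pr(\Gm_{\ds_n}\text{ is simple})$ may tend to $0$, so one cannot simply condition on simplicity as in the finite-second-moment case. I would handle this by truncation. For each $M$, let $\ds_n^{(M)}$ be obtained from $\ds_n$ by replacing every $d_i>M$ with $M$ (correcting parity if necessary), and let $D^{(M)}$ denote the corresponding truncation of $D$. The truncated sequences have bounded degrees, hence bounded second moment, so $\Pr(\Gm_{\ds_n^{(M)}}\text{ simple})$ is bounded below and the multigraph conclusions transfer to $G_{\ds_n^{(M)}}$ directly. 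By \eqref{dcn2}, equivalent to uniform integrability of the $D_n$, one has $\sum_i (d_i-M)^+ \le \delta(M)|\ds_n|$ with $\delta(M)\to 0$ as $M\to\infty$. Coupling $G_{\ds_n^{(M)}}$ with $G_{\ds_n}$ by adding edges incident to the ``extra'' stubs changes $N_k$, $L_1$ and $L_2$ by only $O(\delta(M)|\ds_n|)$. Combined with $\rho_k(D^{(M)})\to\rho_k(D)$ and $\rho(D^{(M)})\to\rho(D)$, which follow from the branching-process definitions by continuity under truncation, this completes the argument.
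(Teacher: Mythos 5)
Your treatment of the multigraph $\Gm_{\ds_n}$ is broadly in the right spirit: the local exploration compared to a branching process with root offspring $D$ and non-root offspring $\widehat D-1$, a first/second-moment calculation for $N_k$, and a sprinkling argument to assemble the giant component. The paper takes a related but quantitatively sharper route (a Lipschitz/Azuma argument after restricting to low-degree neighbourhoods, and an Erd\H os--R\'enyi-style red/blue sprinkling), precisely because it needs \emph{exponential} error bounds on the multigraph events, not just convergence in probability. Your Chebyshev route gives only polynomial decay, and this turns out to be fatal for the final step.

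The genuine gap is the transfer from $\Gm_{\ds_n}$ to $G_{\ds_n}$. The truncation coupling you propose does not exist: there is no way to ``add edges incident to the extra stubs'' to a uniform random \emph{simple} graph with degree sequence $\ds_n^{(M)}$ and obtain a uniform random simple graph with degree sequence $\ds_n$. Uniform random simple graphs with distinct degree sequences are not nested, and the global simplicity constraint means that the conditional structure of $G_{\ds_n}$ given the ``low-degree part'' is not the uniform simple graph on the truncated degree sequence. For instance, with one vertex of very high degree and the rest of degree one, $G_{\ds_n}$ is a star plus a matching while $G_{\ds_n^{(M)}}$ is essentially a matching; you cannot pass from the latter to the former by only inserting edges at the extra stubs (which would have to be paired with already-saturated vertices). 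Moreover, the reason you were forced into this is that your error bounds are not strong enough to afford the simplicity conditioning: when $\Pr(D=i)$ has only a finite first moment, $\Pr(\Gm_{\ds_n}\text{ simple})$ can decay to $0$, and a polynomially small error probability gets swamped by it. The paper's resolution is to prove the stronger Theorem~\ref{th2}, with $e^{-\delta n}$ failure probabilities for arbitrary $\delta$-neighbourhoods of $D$, and then establish in Lemma~\ref{psimple} that $\Pr(\Gm_\ds\text{ simple})\ge e^{-\eps n}$ for every fixed $\eps>0$ once $\dconf(\ds,D)$ is small enough. Since the multigraph failure probability can be made $e^{-\gamma n}$ for a rate $\gamma$ chosen \emph{first}, and the simplicity probability is $\ge e^{-\gamma n/2}$ for an $\eps$ chosen afterwards, the ratio bound $\Pr(G_\ds\in\cP)\le \Pr(\Gm_\ds\in\cP)/\Pr(\Gm_\ds\text{ simple})$ survives the conditioning. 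Without exponential concentration on the multigraph side, this does not work, and the truncation coupling is not a viable substitute.
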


The first result of this type was proved by Molloy and Reed~\cite{MolloyReed2}, building
on their work in~\cite{MolloyReed1}. This result required additional conditions:
taking $|\ds_n|=n$,
they assumed in particular that the maximum degree in $\ds_n$ is
$o(n^{1/4-\eps})$ for some $\eps>0$. Note that \eqref{dcn2} implies only
that the maximum degree is $o(n)$: adding a single vertex with degree
(approximately) $n/\log\log n$, say, does not affect convergence in our sense.

The results of~\cite{MolloyReed2} have been strengthened in a number of ways. One main
direction is to improve, or even study the distribution of, the error term in
the result $L_1=\rho(D)n +\op(n)$, sometimes imposing extra assumptions; see
Kang and Seierstad~\cite{KS}, Pittel~\cite{Pittel_rrg}, Janson and
Luczak~\cite{JL_new}, Riordan~\cite{norm2} and Hatami and Molloy~\cite{HatamiMolloy}, for example.
In the other direction,
one can ask for the same conclusion but with less restrictive assumptions;
here Janson and Luczak~\cite{JL_new}
prove a version of Theorem~\ref{th1}
with the condition that the sum of the squares of the degrees is at most
a constant times the number of vertices. They also prove the (easier) \emph{multigraph}
part of Theorem~\ref{th1} under exactly our conditions (see their Remark 2.6),
but using a very different method.

We shall in fact prove a much stronger form of Theorem~\ref{th1},
Theorem~\ref{th2} below; the reason for postponing the statement is that it
is a little more awkward: instead of convergence, we need to work with neighbourhoods.
Given $D\in\cD$ and a degree sequence $\ds$, writing $r_i=\Pr(D=i)$ as usual,
set
\begin{equation}\label{d10def}
 \dconf^0(\ds,D) = \sum_{i=1}^\infty\lrabs{i\frac{n_i(\ds)}{|\ds|} - \iri},
\end{equation}
so $\dconf^0$ is a form of the $\ell_1$ metric, and define the \emph{configuration distance}
between $\ds$ and $D$ to be
\begin{equation}\label{d11def}
 \dconf(\ds,D) = \max\{ \dconf^0(\ds,D), 1/|\ds| \}.
\end{equation}
The $1/|\ds|$ term in \eqref{d11def} ensures that $\dconf(\ds,D)\to 0$ if and only
if $\dconf^0(\ds,D)\to 0$ and $|\ds|\to\infty$, and avoids having to write `and $|\ds|\ge n_0$'
in many results below; this is a convenience rather than an essential part of the definition.

It is easy to check that, for $D\in \cD$,
\begin{equation}\label{coneq}
 \ds_n\to D  \Longleftrightarrow \dconf(\ds_n,D)\to 0.
\end{equation}
Indeed, if $\dconf(\ds_n,D)\to 0$, then certainly $|\ds_n|\to\infty$.
Also, $\dconf^0(\ds_n,D)\to 0$, which trivially implies \eqref{dcn1},
and implies \eqref{dcn2} by the triangle inequality.
Conversely, suppose that $\ds_n\to D$, and let $\eps>0$.
Since $\sum_i \iri=\E(D)<\infty$, there is some $C=C(\eps)$
such that $\sum_{i<C} \iri\ge \E(D)-\eps$, and so
\begin{equation}\label{iritail}
 \sum_{i\ge C} \iri \le \eps.
\end{equation}
For $n$ large enough, \eqref{dcn1} gives
\begin{equation}\label{niri}
 |\ini(\ds_n)/|\ds_n|-\iri|< \eps/C
\end{equation}
for all $i<C$. Hence
\[
 \sum_{i<C} \ini(\ds_n)/|\ds_n| \ge \sum_{i<C} \iri -\eps \ge \E(D)-2\eps.
\]
Using \eqref{dcn2} it follows that $\sum_{i\ge C} \ini(\ds_n)/|\ds_n|\le 3\eps$ if $n$ is large.
This, \eqref{iritail} and \eqref{niri} imply that $\dconf^0(\ds_n,D)\le 5\eps$. Since $\ds_n\to D$ implies $|\ds_n|\to\infty$
by definition, and $\eps$ was arbitrary, it follows that $\dconf(\ds_n,D)\to 0$.

Let us state for future reference a consequence of the argument just given: if
$\ds_n\to D$ then
\begin{equation}\label{dctail}
 \forall \eps>0\ \exists C\ \forall n\ \sum_{i\ge C} \ini(\ds_n)\le \eps|\ds_n|.
\end{equation}
Writing $\ds_n=(d_1^{(n)},\ldots,d_{\ell_n}^{(n)})$,
\eqref{dctail} can be written as 
\[
 \forall \eps>0\ \exists C\ \forall n\ \sum_{j\,:\,d_j^{(n)}\ge C} d_j^{(n)} \le \eps|\ds_n|.
\]
Informally, this condition says
that a random edge has only a small probability of being attached to a vertex of very high degree.
A rather trivial consequence of \eqref{dctail} is that, writing $\Delta(\ds)$ for
the maximum degree appearing in a degree sequence $\ds$, if $\ds_n\to D$ then
$\Delta(\ds_n) = o(|\ds_n|)$.
In terms of the metric, the equivalent of \eqref{dctail} is the observation that
\begin{equation}\label{dconftail}
 \forall D\in\cD, \eps>0 \ \exists C,\delta: \dconf(\ds,D)<\delta \Longrightarrow
 \sum_{i\ge C}\ini(\ds) \le \eps |\ds|.
\end{equation}
To see this, simply choose $C$ such that $\sum_{i\ge C} i\Pr(D=i)<\eps/2$,
and take $\delta=\eps/2$.

\begin{theorem}\label{th2}
Let $D\in\cD$, and let $\eps>0$. For each $k\ge 1$ there exists $\delta>0$ (depending
on $D, \eps$ and $k$) such that if $\dconf(\ds,D)<\delta$,
then
\begin{equation}\label{th2k}
 \Pr\bb{ |N_k(G_\ds) - \rho_k(D)n| \ge \eps n } \le e^{-\delta n},
\end{equation}
where $n=|\ds|$.
Moreover, if $\Pr(D\ge 3)>0$, then there exists $\delta>0$ (depending
on $D$ and $\eps$) such that if $\dconf(\ds,D)<\delta$
then
\[
 \Pr\Bb{ \bigabs{L_1(G_\ds)-\rho(D) n} \ge \eps n} \le e^{-\delta n}
\]
and
\[
 \Pr\bb{ L_2(G_\ds) \ge \eps n} \le e^{-\delta n}.
\]
Furthermore, the same conclusions hold if $G_\ds$ is replaced by $\Gm_\ds$.
\end{theorem}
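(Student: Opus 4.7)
The plan is to first establish the bounds for the configuration multigraph $\Gm_\ds$ by an explicit stub-by-stub exploration, and then to transfer them to $G_\ds$ by truncating high-degree vertices. In the multigraph, the uniformity of the pairing means that, conditional on the history, the partner of each exposed stub is uniform over the remaining unpaired stubs; hence the degree of a newly reached vertex is size-biased among the unexplored degrees. After truncating vertices of degree exceeding some fixed $C$ -- which by \eqref{dconftail} account for only $\eps n$ of the stub mass -- this size-biased distribution stays close in total variation to the size-biased $D^{\ast}$ of $D$, and the first $k$ generations of a BFS from a fixed vertex can be coupled to a Galton--Watson tree $T$ with root degree $D$ and subsequent offspring count $D^{\ast}-1$. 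By definition $\rho_k(D)=\Pr(|T|=k)$ and $\rho(D)=\Pr(|T|=\infty)$, so the coupling yields $|\E N_k(\Gm_\ds)/n - \rho_k(D)| < \eps/2$ whenever $\dconf(\ds,D)$ is small enough.

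\textbf{Concentration of $N_k$.} Expose the pairs in a fixed order and let $\cF_j$ be the $\sigma$-algebra generated by the first $j$ pairs. Adding one pair alters $N_k$ by at most $3k$: a new edge can destroy the two size-$k$ components it merges (subtracting up to $2k$) and create a new size-$k$ component (adding up to $k$). The Doob martingale $\E[N_k\mid\cF_j]$ therefore has increments bounded in absolute value by $3k$, and since $m(\ds)=O(n)$, the Azuma--Hoeffding inequality gives
\[
 \Pr\bb{|N_k(\Gm_\ds)-\E N_k(\Gm_\ds)|\ge \tfrac{\eps}{2} n} \le 2\exp\Bb{-\Omega_k(\eps^2 n)}.
\]
Combining with the expectation estimate of the previous paragraph yields \eqref{th2k} for $\Gm_\ds$.

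\textbf{Giant component via sprinkling.} Azuma cannot be applied directly to $L_1$ since a single pair can shift it linearly. Instead, pick $K$ large enough that $\sum_{k>K}\rho_k(D)<\eps/4$; applying the previous step with precision $\eps/(2K)$ to each of $N_1,\dots,N_K$ shows that the number of vertices in components of size exceeding $K$ lies within $\eps n$ of $\rho(D)n$ with probability $1-e^{-\Omega(n)}$. A standard sprinkling argument -- revealing the pairing in two rounds, leaving an $\eta$-fraction of stubs for the second -- then shows that any two size-$>K$ components from round one, each carrying $\Omega(n)$ unpaired stubs, get merged by round two with probability $1-e^{-\Omega(n)}$. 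Consequently $|L_1(\Gm_\ds)/n-\rho(D)|\le \eps$ and $L_2(\Gm_\ds)\le K\le \eps n$ (for $n\ge K/\eps$), both with exponentially small failure probability.

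\textbf{Transfer to $G_\ds$ and main obstacle.} Conditioning $\Gm_\ds$ on simplicity would be the natural route, but without moment control on $D$ this conditional probability may decay exponentially in $n$. We therefore truncate first: remove all stubs belonging to vertices of degree exceeding some $C$, which by \eqref{dconftail} removes at most $\eps n$ stubs and so alters any component count by at most $\eps n$. On the truncated sequence (maximum degree $C$) the probability that $\Gm$ is simple is bounded below by a constant depending only on $C$ and $\E D$, and the multigraph estimates above transfer to the corresponding simple graph by conditioning. Re-inserting the high-degree vertices via a switching argument then preserves component counts up to $O(\eps n)$, giving the result for $G_\ds$. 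The hardest step is precisely this transfer: one must verify that both truncation and re-insertion preserve the exponential concentration established for $\Gm_\ds$, and that conditioning on simplicity retains enough independence for the Azuma-type analysis.
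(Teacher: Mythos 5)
The multigraph part of your proposal follows essentially the same route as the paper: a local exploration/coupling argument to pin down $\E N_k(\Gm_\ds)$, Azuma--Hoeffding via the $O(k)$-Lipschitz property of $N_k$, summing over $k'<K$ to control $N_{\ge K}$, and a two-round sprinkling argument to merge the large components into one. (Your sprinkling paragraph glosses over a real issue the paper handles carefully -- after round one a large component need not automatically ``carry $\Omega(n)$ unpaired stubs,'' and the paper has to introduce ``helpful'' vertices via the local properties $\cG_t$, $\cM_{\Delta,t}$ to ensure that almost every vertex in a large component is close to a round-two stub -- but the overall strategy is the same.)

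The transfer to $G_\ds$ is where your proposal goes wrong, and the error is not cosmetic. You assert that ``without moment control on $D$ this conditional probability may decay exponentially in $n$,'' and use that as the motivation for avoiding conditioning on simplicity. This is precisely the misconception the paper's Lemma~\ref{psimple} is designed to dispel: under nothing more than $\dconf(\ds,D)<\delta$ with $D\in\cD$ (i.e.\ the uniform-integrability condition \eqref{dctail}/\eqref{dconftail}, no second-moment assumption), one has $\Pr(\Gm_\ds\text{ simple})\ge e^{-\eps n}$ for every $\eps>0$. The proof there isolates the (few) high-degree stubs, shows that with probability $\ge e^{-\eps n/2}$ they all pair to distinct low-degree vertices, and then invokes the classical bounded-degree estimate for the remainder. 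Because the multigraph tail bounds are exponential with a rate $\gamma$ that can be taken fixed once $\eps$ is fixed, a subexponential lower bound on $\Pr(\cS)$ is exactly enough: $\Pr(G_\ds\in\cP)\le \Pr(\Gm_\ds\in\cP)/\Pr(\cS)\le e^{-\gamma n}\cdot e^{\gamma n/2}=e^{-\gamma n/2}$.

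Your proposed substitute -- truncate to maximum degree $C$, condition the truncated multigraph on simplicity, then ``re-insert the high-degree vertices via a switching argument'' -- does not produce a valid proof. Conditioning $\Gm_{\ds^{(C)}}$ on simplicity yields the uniform simple graph on the \emph{truncated} degree sequence, and there is no coupling given (and none is obvious) between that object-plus-reinsertion and $G_\ds$, the uniform simple graph on the \emph{original} degree sequence. The ``switching argument'' is left entirely unspecified, and the claim that re-insertion ``preserves component counts up to $O(\eps n)$'' conflates a deterministic edge-count bound with a statement about the uniform distribution on simple graphs with a prescribed degree sequence. To make your route rigorous you would end up having to prove something equivalent to Lemma~\ref{psimple} anyway; it is much cleaner to prove and use that lemma directly.
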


Using \eqref{coneq}, it is easy to check that Theorem~\ref{th2} does indeed
strengthen Theorem~\ref{th1}. The main reason for proving the stronger bounds in
Theorem~\ref{th2} is that we need them for the configuration multigraph model
$\Gm_\ds$ in order to prove even Theorem~\ref{th1} for the simple random graph
$G_\ds$. Of course, they are also nice to have!

\begin{remark}
The condition $\Pr(D\ge 3)>0$ in Theorems~\ref{th1} and~\ref{th2}
is necessary for the conclusions; see Janson and Luczak~\cite[Remark 2.7]{JL_new} for a discussion
of the range of possible behaviours when $\Pr(D=2)=1$ (or $D$ is supported on $\{0,2\}$).
\end{remark}

The basic idea of the proof of Theorem~\ref{th1} is to use a (relatively) old
method. The first ingredient is to understand the local structure of $\Gm_\ds$;
this is very simple and can be expressed in a number of ways, most cleanly by
comparison with a branching process. This allows us to control the number of
vertices in small components. Then we use a version of the original sprinkling
argument of Erd\H os and R\'enyi~\cite{ER_giant} to show that almost all vertices in `large'
components are in a single giant component. Of course, sprinkling is more
complicated in the present model than in the original context. Also, to obtain
exponential error bounds we need a strong form of the branching process
approximation, which introduces some additional complications.  We shall show in
Section~\ref{sec_extras} that this approximation carries over to the giant
component: the number of vertices in the giant component with some `local'
property can be calculated in terms of the branching process.

Turning to the nitty-gritty, in the rest of the paper we use the following standard asymptotic notation:
given a sequence $E_n$ of events, we say that $E_n$ holds \emph{with high probability}
or \emph{whp} if $\Pr(E_n)\to 1$ as $n\to\infty$.
Given functions $f$ and $g$ of some parameter (usually $n$), $f=O(g)$ means $f$
is bounded by a constant times $g$, and $f=o(g)$ means that $f/g\to 0$ as the
parameter ($n$) tends to infinity.

Finally, before turning to the proofs, let us fix our formal notation for the 
configuration model: given a degree sequence $\ds$ of length $\ell$,
we take disjoint sets $F_1,\ldots,F_\ell$ with $|F_i|=d_i$, where
$F_i$ represents the `stubs' associated to vertex $i$. Then we take
a pairing (partition into 2-elements sets) $\pairing$ of $F=\bigcup_{i=1}^\ell F_i$
chosen uniformly at random, and set $\Gm_\ds=\phi_\ds(\pairing)$,
where $\phi_\ds$ maps a pairing $\pairing$ to a multi-graph on $[\ell]=\{1,2,\ldots,\ell\}$
by replacing each pair $\{a,b\}$ by an edge joining vertices $i$ and $j$
where $a\in F_i$ and $b\in F_j$, noting that $i=j$ is possible,
in which case the edge is a loop.

\section{Local approximation by a branching process}\label{bp}

Let $D=(r_0,r_1,\ldots)\in \cD$, so $D$ is a probability distribution on the non-negative integers with
$0<\E(D)<\infty$, and $r_i=\Pr(D=i)$.
For $i\ge 1$ let
\[
 q_i = \frac{\iri}{\sum_i \iri} = \frac{\iri}{\E(D)}.
\]
The distribution $D^*$ with $\Pr(D^*=i)=q_i$ is known as the \emph{size-biased} distribution
associated to $D$. In any graph $G$, if we pick a random edge
and then choose one of its endvertices $v$ at random, the distribution
of the degree of $v$ is the size-biased version of the degree distribution of $G$.
Let $Z_D=D^*-1$, so
\begin{equation}\label{Zdef}
 \Pr(Z_D=i) = \Pr(D^*=i+1) = \frac{(i+1)r_{i+1}}{\E(D)} = \frac{(i+1)\Pr(D=i+1)}{\E(D)}.
\end{equation}
Intuitively, $Z_D$ will correspond to the number of `new' edges we get to when
we follow a random edge to an endvertex.

Let $\cT^1=\cT^1_D$ be the Galton--Watson branching process with offspring
distribution $Z_D$, so $\cT^1$ is a random rooted tree in which the number of children of each vertex
has distribution $Z_D$, with these numbers independent.
Finally, let $\cT=\cT_D$ be the random rooted tree in which the degree of the root
has the distribution $D$, and, given the degree of the root,
the branches, i.e., the subtrees rooted at the children
of the root, form independent copies of $\cT^1$.

It is not hard to see that
if $\ds_n\to D$, then $\Gm_{\ds_n}$ `locally looks like' 
$\cT_D$; we shall make this precise in a moment. Let $|\cT_D|\le\infty$
denote the total number of vertices of $\cT_D$. Then the constants
$\rho_k$ and $\rho$ appearing in Theorems~\ref{th1} and~\ref{th2}
are
\begin{equation}\label{rhokdef}
 \rho_k(D) = \Pr( |\cT_D|=k ) \hbox{\quad and\quad} \rho(D)=\Pr(|\cT_D|=\infty).
\end{equation}

Given a graph $G$, for $v\in V(G)$ and $t\ge 0$,
let $\Ga_{\le t}(v)=\Ga_{\le t}^G(v)$ denote the subgraph
of $G$ induced by the vertices within (graph) distance $t$ of $v$,
regarded as a rooted graph with root $v$. Also, let $\cT_D|_t$ be the 
subtree of $\cT_D$ induced by the vertices within distance $t$ of the root.
The following lemma is a variant of an idea that is by now very much standard,
though perhaps not in exactly this form.

\begin{lemma}\label{l_couple}
Let $D\in \cD$ and suppose that $\ds_n\to D$.
Let $v$ be a vertex of $G=\Gm_{\ds_n}$ chosen uniformly at random.
Then we may couple the random graphs $\Ga_{\le t}^G(v)$ and $\cT_D|_t$
so that they are isomorphic as rooted graphs with probability $1-o(1)$
as $n\to\infty$.
\end{lemma}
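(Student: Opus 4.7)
The plan is the classical branching-process coupling for configuration models: I would construct $\Ga_{\le t}^G(v)$ by a breadth-first exploration starting at $v$ and, in parallel, generate an independent copy of $\cT_D|_t$ level by level, coupling the two so that they coincide with probability $1-o(1)$.

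First, the root: since $v$ is uniform, $d_v$ has the empirical degree distribution $D_n$. By \eqref{dcn1}, $D_n\dto D$, and convergence in distribution to a discrete limit is equivalent to total-variation convergence, so we couple $d_v$ with the root degree of $\cT_D$ (which has law $D$) to agree whp. Subsequently, at each step of the BFS we take an unmatched stub at a revealed vertex and pair it uniformly with some remaining stub. Conditional on having already revealed vertices of total degree $K$, the partner lies at a vertex of degree $i$ with probability equal, up to an error $O(K/n)$, to $q_i^{(n)}=in_i(\ds_n)/(2m(\ds_n))$. Conditions \eqref{dcn1}, \eqref{dcn2} and \eqref{dctail} together give $\sum_i|q_i^{(n)}-q_i|\to 0$, i.e.\ $D_n^*\to D^*$ in total variation, so the number of new stubs pushed onto the BFS queue (the partner's degree minus one) can be coupled with an independent copy of $Z=D^*-1$ up to total-variation error $o(1)+O(K/n)$. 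In the same step, the probability that the partner's stub actually lies at an \emph{already} revealed vertex (creating a cycle or a loop) is $O(K/n)$; on the complement of this bad event, the revealed subgraph remains a tree isomorphic to the partially built $\cT_D|_t$.

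The main obstacle is that $\cT_D|_t$ need not be of bounded size and $Z$ need not have bounded mean, so without further control we cannot guarantee that $K$ stays small during the exploration. The way around this is that $\cT_D|_t$ is almost surely finite: by induction on $t$, using that each $Z$ is a.s.\ finite, each level of $\cT_D|_t$ is a finite sum of finitely many a.s.\ finite terms. Hence, for any $\eta>0$, we can choose $M=M(\eta,t,D)$ so that the total number of stubs needed to build $\cT_D|_t$ is at most $M$ with probability at least $1-\eta$. Conditioned on that event, the exploration involves at most $M$ pairings, each contributing a failure probability of $o(1)+O(M/n)$, so the total coupling failure probability is at most $\eta+o(1)$. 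Taking $n\to\infty$ and then $\eta\downto 0$ yields the lemma.
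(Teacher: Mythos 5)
Your proposal is correct and uses the same strategy as the paper: a step-by-step (BFS) exploration of $\Ga_{\le t}^G(v)$ coupled with a step-by-step generation of $\cT_D|_t$, noting that the per-step coupling error is $o(1)+O(K/n)$ where $K$ is the total degree revealed so far, and then using the a.s.\ finiteness of $\cT_D|_t$ to truncate to a bounded number $M$ of steps at the cost of an arbitrarily small $\eta$. The paper gives only an outline (fixing the step index $j$ and noting the per-step error is $o(1)$ for $j=O(1)$), while you make the $O(K/n)$ error and the Scheff\'e/total-variation reductions explicit; this is the same argument with the details filled in, and I see no gap.
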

\begin{proof}
As the argument is straightforward and standard we give only an outline.
The idea is to reveal $\Ga_{\le t}(v)$ step-by-step in the natural way,
coupling this process with revealing $\cT_D|_t$ step-by-step
so that for any fixed $j$, the probability of the coupling failing
at step $j$ is $o(1)$. Since, given any $\eps$, there
is some constant $J$ such that with probability at least $1-\eps$
the finite tree $\cT_D|_t$ has size at most $J$, this suffices
to prove the lemma.

To reveal $\Ga_{\le t}(v)$, first pick the random vertex $v$,
noting that by condition \eqref{dcn1} of the convergence $\ds_n\to D$,
the degree of $v$ can be coupled to agree with the degree 
of the root of $\cT_D$ with probability $1-o(1)$. Then go through the 
stubs associated to $v$ one-by-one, revealing their partners, and thus
the neighbours of $v$ (as well as any loops at $v$). Then reveal the partners
of the unpaired stubs associated to the neighbours of $v$, and so on.
The key fact is that the $j$th time we reveal the partner of an unpaired stub,
the probability that this is a `new' (not so far reached in the exploration)
vertex of degree $i$ is exactly
\[
 \frac{i (n_i(\ds_n)-u_{i,j})}{2m(\ds_n)+1-2j},
\]
where $u_{i,j}$ is the number of degree-$i$ vertices revealed so far.
For any fixed $j$, since $u_{i,j}\le j=O(1)$, this probability is $q_i+o(1)$.
Since $q_i$ is the probability that a vertex of $\cT_D$ other than the root
has degree $i$ (and hence $i-1$ children), it follows that the coupling
succeeds at step $j$ with probability $1-o(1)$, as required.
\end{proof}

\begin{corollary}\label{c_tree}
Let $D\in \cD$, suppose that $\ds_n\to D$, and let $t\ge 1$ be constant.
Let $v$ be a vertex of $G=\Gm_{\ds_n}$ chosen uniformly at random.
Then whp the neighbourhood $\Ga_{\le t}(v)$ of $v$ in $G$
is a tree.\noproof
\end{corollary}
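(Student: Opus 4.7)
The plan is to deduce this immediately from Lemma \ref{l_couple}. By construction, the random rooted object $\cT_D|_t$ is a subtree of the Galton--Watson tree $\cT_D$, and is therefore itself a tree (with probability $1$). Lemma \ref{l_couple} gives a coupling under which $\Ga_{\le t}^G(v)$ and $\cT_D|_t$ are isomorphic as rooted graphs with probability $1-o(1)$. Under such an isomorphism, $\Ga_{\le t}^G(v)$ must also be a tree, so the event that $\Ga_{\le t}^G(v)$ fails to be a tree has probability $o(1)$.

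The only mild subtlety is that the lemma is for a fixed $t$, so we must note that $t\ge 1$ is constant as stated in the corollary; this is exactly the regime in which Lemma \ref{l_couple} applies. There is no obstacle beyond invoking the lemma: the main work (the step-by-step coupling, the bound $u_{i,j}\le j=O(1)$, and the fact that the branching-process offspring probabilities $q_i$ match the limiting stub-following probabilities) is already done in the proof of Lemma \ref{l_couple}. Hence the corollary is essentially a one-line consequence.
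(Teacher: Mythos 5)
Your proposal is correct and matches the paper's intended argument: the corollary is stated with no proof precisely because it follows immediately from Lemma~\ref{l_couple} by the observation that $\cT_D|_t$ is always a tree. Nothing further is required.
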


Note that in many related situations, the equivalent of Corollary~\ref{c_tree}
is proved by considering the expected number of paths of length $k$ ending in a vertex
on a cycle of length $\ell$, showing that this expectation is $o(n)$
for $k$ and $\ell$ fixed. However, this requires some condition such as $\sum_i d_i^2=n^{1+o(1)}$,
which need not hold here -- it may be that $\Gm_\ds$ contains many (more than $n$) short
cycles, but these are all concentrated in the neighbourhoods of the few vertices
with largest degrees, so most vertices are far from them.

Let $\cP$ be a property of (locally finite) rooted graphs, i.e., a set of rooted graphs closed
under isomorphism. Often we think of $\cP$ as a property of
vertices $v$ of unrooted graphs $G$, by taking $v$ as the root; in either case
we write $(G,v)\in \cP$ to mean that the graph $G$ rooted at $v$ has property
$\cP$.  We write $N_\cP(G)$ for the number of vertices of $G$ with property
$\cP$.  Given $t\ge 1$, we say that $\cP$ is \emph{$t$-local} if whether $(G,v)$ has $\cP$
depends only on the rooted graph $\Ga_{\le t}^G(v)$. We call $\cP$ \emph{local} if it is
$t$-local for some $t$. Note that it makes sense to speak of our branching
process $\cT_D$ having property $\cP$, since $\cT_D$ is a rooted tree.  If $\cP$
is $t$-local, then whether $\cT_D$ has $\cP$ depends only on $\cT_D|_t$. 

Lemma~\ref{l_couple} immediately implies the following result, of which
Corollary~\ref{c_tree} is a special case.

\begin{corollary}\label{c_prop}
Let $\cP$ be a local property of rooted graphs,
let $D\in \cD$, suppose that $\ds_n\to D$,
and let $v$ be a vertex of $\Gm_{\ds_n}$ chosen uniformly at random.
Then
\[
 \Pr\bb{ (\Gm_{\ds_n},v)\in \cP } \to \Pr(\cT_D\in \cP)
\]
as $n\to\infty$.
Equivalently, $\E( N_\cP(\Gm_{\ds_n}) ) = \Pr(\cT_D\in \cP)|\ds_n|+o(|\ds_n|)$.\noproof
\end{corollary}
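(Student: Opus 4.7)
The plan is to derive this corollary almost immediately from Lemma~\ref{l_couple}, exploiting the locality of $\cP$. Since $\cP$ is local, fix $t$ such that $\cP$ is $t$-local. Then membership of $(\Gm_{\ds_n},v)$ in $\cP$ is determined by the rooted isomorphism type of $\Ga_{\le t}^G(v)$, and membership of $\cT_D$ in $\cP$ is determined by the rooted isomorphism type of $\cT_D|_t$.

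First, I would apply Lemma~\ref{l_couple} at this value of $t$ to obtain a coupling of $\Ga_{\le t}^G(v)$ and $\cT_D|_t$ under which they are isomorphic as rooted graphs with probability $1-o(1)$. Under this coupling, the indicator variables $\mathbf{1}_{(\Gm_{\ds_n},v)\in\cP}$ and $\mathbf{1}_{\cT_D\in\cP}$ agree with probability $1-o(1)$, so their expectations differ by $o(1)$. That gives the first displayed conclusion
\[
 \Pr\bb{ (\Gm_{\ds_n},v)\in \cP } = \Pr(\cT_D\in \cP) + o(1).
\]

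For the second displayed conclusion, I would use linearity of expectation: since $v$ is uniformly random over the $|\ds_n|$ vertices,
\[
 \E\bb{N_\cP(\Gm_{\ds_n})} = |\ds_n|\cdot \Pr\bb{(\Gm_{\ds_n},v)\in \cP},
\]
and substituting the first conclusion yields $\E(N_\cP(\Gm_{\ds_n}))=\Pr(\cT_D\in\cP)|\ds_n|+o(|\ds_n|)$.

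There is no real obstacle here — the content is packaged entirely into Lemma~\ref{l_couple}, and the only small point to check is that locality lets us read off property $\cP$ from the coupled objects (which is why we fix $t$ at the outset). The author's decision to mark the statement \texttt{\textbackslash noproof} confirms that the argument is expected to be this one-line reduction.
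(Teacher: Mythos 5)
Your argument is exactly the one the paper intends: fix $t$ so that $\cP$ is $t$-local, invoke Lemma~\ref{l_couple} to couple $\Ga_{\le t}^G(v)$ with $\cT_D|_t$ so they agree as rooted graphs with probability $1-o(1)$, conclude that the $\cP$-indicators agree except on an $o(1)$-probability event, and pass to the expectation of $N_\cP$ by linearity over the $|\ds_n|$ equally-likely choices of $v$. This is correct and matches the paper, which marks the corollary \texttt{\textbackslash noproof} precisely because it is this immediate consequence of the coupling lemma.
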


When we come to concentration, it will be convenient to work with a restatement
of this last corollary.
\begin{corollary}\label{c_prop1}
Let $\cP$ be a local property of rooted graphs, and
let $D\in \cD$ and $\eps>0$ be given. Then there exists $\delta>0$ such
that if $\dconf(\ds,D)<\delta$
then
\begin{equation}\label{propclose}
 \bigabs{\E( N_\cP(\Gm_\ds) ) - \Pr(\cT_D\in \cP)n} \le \eps n,
\end{equation}
where $n=|\ds|$.
\end{corollary}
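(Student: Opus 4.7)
The plan is to deduce Corollary~\ref{c_prop1} from Corollary~\ref{c_prop} by a standard compactness/contradiction argument, relying on the equivalence \eqref{coneq}. The $1/|\ds|$ term built into the definition \eqref{d11def} of $\dconf$ was included precisely to make this transfer immediate.

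Suppose, for a contradiction, that the conclusion fails. Then there exist $D\in\cD$, a local property $\cP$, and some $\eps>0$ such that for every integer $n\ge 1$ we can find a degree sequence $\ds^{(n)}$ with $\dconf(\ds^{(n)},D)<1/n$ but
\[
 \bigabs{\E(N_\cP(\Gm_{\ds^{(n)}})) - \Pr(\cT_D\in\cP)N_n} > \eps N_n,
\]
where $N_n:=|\ds^{(n)}|$. Because $\dconf(\ds^{(n)},D)<1/n$ and by \eqref{d11def} we have $\dconf(\ds^{(n)},D)\ge 1/N_n$, the inequality $1/N_n<1/n$ gives $N_n>n$, so $N_n\to\infty$. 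Since also $\dconf(\ds^{(n)},D)\to 0$, the equivalence \eqref{coneq} yields $\ds^{(n)}\to D$ in the sense of \eqref{dcn1} and \eqref{dcn2}.

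Corollary~\ref{c_prop} now applies to the sequence $\ds^{(n)}$ and gives
\[
 \E(N_\cP(\Gm_{\ds^{(n)}})) = \Pr(\cT_D\in\cP)N_n + o(N_n),
\]
which directly contradicts the display above for all sufficiently large $n$. Hence some $\delta=\delta(D,\cP,\eps)>0$ with the stated property must exist.

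No step here is a serious obstacle: the only thing one must be careful about is that the sequence $\ds^{(n)}$ produced by negating the statement of Corollary~\ref{c_prop1} actually has $|\ds^{(n)}|\to\infty$, so that Corollary~\ref{c_prop} is applicable. This is exactly what the $1/|\ds|$ safeguard in \eqref{d11def} guarantees, so there is nothing more to do.
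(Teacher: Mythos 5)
Your argument is correct and is essentially the same contradiction/compactness argument the paper gives, just with the $|\ds^{(n)}|\to\infty$ step (via the $1/|\ds|$ safeguard in the definition of $\dconf$) spelled out explicitly rather than left implicit.
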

\begin{proof}
Suppose not. Then for each $n$ there is a degree sequence $\ds_n$
with $\dconf(\ds_n,D)\le 1/n$ for which \eqref{propclose}
fails. Recalling \eqref{coneq},
applying Corollary~\ref{c_prop} to $(\ds_n)_{n=1}^\infty$ gives a contradiction.
\end{proof}

The key property to which we shall apply this result is the property
$\cP_k$ that the component of the root contains exactly $k$ vertices. Note that in this case
\begin{equation}\label{trans}
 N_{\cP_k}(G) = N_k(G) \hbox{\qquad and\qquad}\Pr(\cT_D\in\cP_k)=\rho_k(D).
\end{equation}

We can easily use the second moment method (exploring from two random
vertices $v$ and $w$) to prove that $N_\cP(\Gm_\ds)$ is concentrated in the sense
that $N_\cP(\Gm_{\ds_n})/n$ converges in probability when $\ds_n\to D$ with $|\ds_n|=n$.
Instead we use the Hoeffding--Azuma inequality to prove a stronger result.

Two configurations (pairings) $\pairing_1$ and $\pairing_2$ are \emph{related by a switching}
if $\pairing_2$ can be obtained from $\pairing_1$ by deleting two pairs $\{a,b\}$ and $\{c,d\}$
and inserting the pairs $\{a,c\}$ and $\{b,d\}$.
A function $f$ defined on pairings of some fixed set
is \emph{$C$-Lipschitz} if $|f(\pairing_1)-f(\pairing_2)|\le C$ whenever
$\pairing_1$ and $\pairing_2$ are related by a switching.
We shall use the following standard simple lemma.

\begin{lemma}\label{AHs}
Let $S$ be a set with size $2m$, and let $f$ be a $C$-Lipschitz
function of pairings of $S$.
If $\pairing$ is chosen uniformly at random from all pairings of $S$, then for any $t\ge 0$
we have
\[
 \Pr\Bb{ \bigabs{f(\pairing)-\E(f(\pairing))} \ge t} \le 2\exp(-t^2/(4C^2m)).
\]
\end{lemma}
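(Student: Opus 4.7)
My plan is to prove Lemma~\ref{AHs} by the standard Doob martingale + Hoeffding--Azuma route, with the Lipschitz hypothesis translated into bounded martingale differences via a coupling by switchings. Set $n=2m$ and fix any canonical ordering of $S$. Generate a uniform pairing $\pairing$ of $S$ in $m$ steps: at step $i$, let $a_i$ be the smallest element of $S$ not yet paired, and reveal its partner $b_i$ (uniform over the remaining $2m-2i+1$ elements). Let $\cF_i$ be the $\sigma$-algebra generated by $(b_1,\ldots,b_i)$ and set $M_i = \E(f(\pairing)\mid\cF_i)$. Then $M_0=\E(f(\pairing))$, $M_m = f(\pairing)$, and $(M_i)$ is a martingale.

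The only real step is to show $|M_i - M_{i-1}|\le C$. Fix $\cF_{i-1}$; then the as-yet-unrevealed structure is a uniformly random pairing of the remaining set $R$ of even size, and it is enough to prove that for any two admissible values $b$ and $b'$ of $b_i$,
\[
\bigabs{\E(f(\pairing)\mid \cF_{i-1}, b_i=b) - \E(f(\pairing)\mid\cF_{i-1}, b_i=b')} \le C,
\]
since $M_i - M_{i-1}$ is a deviation from such an average. To see this, couple the two conditional distributions as follows: given a uniform pairing $\pairing_1$ of $R$ containing $\{a_i,b\}$, locate the unique pair $\{b',c\}\in\pairing_1$ (note $c\notin\{a_i,b,b'\}$), and let $\pairing_2$ be obtained by deleting $\{a_i,b\}$ and $\{b',c\}$ and inserting $\{a_i,b'\}$ and $\{b,c\}$. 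This map is a bijection between pairings of $R$ containing $\{a_i,b\}$ and those containing $\{a_i,b'\}$, so it yields the desired coupling, and by construction $\pairing_1$ and $\pairing_2$ are related by a single switching. The Lipschitz hypothesis then gives $|f(\pairing_1)-f(\pairing_2)|\le C$, hence the displayed bound after averaging, and therefore $|M_i-M_{i-1}|\le C$.

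With $m$ martingale steps each bounded by $C$, the standard Hoeffding--Azuma inequality yields
\[
\Pr\bb{|f(\pairing) - \E f(\pairing)|\ge t}\le 2\exp(-t^2/(2mC^2)),
\]
which is in fact slightly stronger than the claimed bound $2\exp(-t^2/(4C^2 m))$, so we are done.

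The only non-routine ingredient is the coupling in the middle paragraph: the rest is bookkeeping. The potential pitfall is checking that the swap $\{a_i,b\},\{b',c\}\mapsto\{a_i,b'\},\{b,c\}$ is always a legitimate switching, i.e., that $c\notin\{a_i,b,b'\}$ (which follows since $a_i$ and $b$ are already paired to each other in $\pairing_1$, so $b'$ cannot be paired to either of them, and $b'\ne c$ as a vertex is not paired with itself). Everything else is standard.
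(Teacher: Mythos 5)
Your proof is correct and takes essentially the same route as the paper: a Doob martingale obtained by revealing the pairing sequentially, with the Lipschitz-under-switchings hypothesis converted into bounded martingale differences via a switching coupling, followed by Hoeffding--Azuma. The only difference is cosmetic but worth noting: the paper reveals the partner of $s_1,\ldots,s_{2m}$ one element at a time (a $2m$-step filtration, roughly half of whose steps are vacuous), whereas you reveal one new pair per step (an $m$-step filtration), and therefore land on the slightly sharper constant $2\exp(-t^2/(2mC^2))$ rather than the paper's $2\exp(-t^2/(4mC^2))$. Since the paper only needs some exponential bound, the looser constant suffices there, but your version is a genuine (if minor) improvement and the coupling/bijection verification you give is the same as theirs.
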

\begin{proof}
Let $S=\{s_1,\ldots,s_{2m}\}$. Let us condition on the partners of
$s_1,\ldots,s_i$, writing $\Omega'$
for the set of all pairings consistent with the information revealed so far.
Now consider $s_{i+1}$. It may be that its partner
is determined, since it is paired to one of $s_1,\ldots,s_i$.
If not, for any possible partner $b$ let $\Omega'_b$ be the subset of $\Omega'$
containing all pairings in which $s_{i+1}$ is paired with $b$.
For distinct possible partners $b$ and $c$, there is a bijection
between $\Omega'_b$ and $\Omega'_c$ in which each $\pairing_1\in \Omega'_b$
is related to its image $\pairing_2$ by a switching: we simply switch
the pairs $\{s_{i+1},b\}$ and $\{c,d\}$ for $\{s_{i+1},c\}$ and $\{b,d\}$,
where $d$ is the partner of $c$ in $\pairing_1$ (and hence of $b$ in $\pairing_2$).

Write $\cF_i$ for the (finite) sigma-field generated by the random variables
listing the partners of $s_1,\ldots,s_i$.  The bijection
just given and the Lipschitz property of $f$ easily imply that
$\E( f(\pairing) \mid \cF_{i+1})$ is always within $C$ of $\E( f(\pairing)\mid \cF_i)$.
Thus the sequence $(X_i)_{i=0}^{2m}$ with $X_i=\E( f(\pairing)\mid \cF_i)$
is a martingale with differences bounded by $C$.
The result now follows from the Hoeffding--Azuma inequality,
noting that $X_0=\E(f(\pairing))$ and $X_{2m}= f(\pairing)$.
\end{proof}

Since $N_k(G)$ changes by at most $2k$ when an edge is added to or deleted
from a multigraph $G$, and a switching corresponds to deleting two edges and adding
two edges, $N_k(\Gm_\ds)$ is $8k$-Lipschitz as a function
of the pairing used to generate $\Gm_\ds$. (In fact, it is
$4k$-Lipschitz.) Thus Lemma~\ref{AHs} implies concentration
of $N_k(\Gm_\ds)=N_{\cP_k}(\Gm_\ds)$. Later we shall consider more general
properties than $\cP_k$, and then we must work harder to obtain
concentration results -- in general for a local
property $\cP$, there is no constant $C=C(\cP)$ such that $N_{\cP}(G)$
is $C$-Lipschitz. So we need to modify our properties slightly, 
to `avoid high-degree vertices'.

For $\Delta\ge 2$ and $t\ge 0$, let $\cM_{\Delta,t}$
be the property that every vertex within graph distance $t$ of the root has
degree at most $\Delta$. Note that $\cM_{\Delta,t}$ is $(t+1)$-local.
\begin{lemma}\label{Mlip}
Let $\cP$ be a $t$-local property, and let $\cQ=\cP\cap\cM_{\Delta,t}$.
Then the number $N_{\cQ}(G)$ of vertices of a multigraph $G$ with
property $\cQ$ changes by at most $4\Delta^t$ if a single
edge is added to or deleted from $G$. Furthermore,
$N_\cQ(G)$ is $16\Delta^t$-Lipschitz.
\end{lemma}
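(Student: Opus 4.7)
The plan is to prove the single-edge claim first --- $|N_\cQ(G_1) - N_\cQ(G_2)| \le 4\Delta^t$ whenever $G_1$ and $G_2$ differ by one edge $e = \{x, y\}$ --- and then deduce the $16\Delta^t$ Lipschitz bound by noting that a switching removes two pairs and inserts two, which in the multigraph corresponds to removing two edges and inserting two, so we apply the single-edge bound four times via intermediate multigraphs.

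The heart of the argument is a localization claim: if the $\cQ$-status of a vertex $v$ differs between $G_1$ and $G_2$, and $(G_i, v) \in \cQ$ for some $i \in \{1, 2\}$, then $v$ lies within graph distance $t$ of $x$ or of $y$ in $G_i$. I would prove this by splitting on which of $\cP$ or $\cM_{\Delta, t}$ flipped. If $\cP$ flipped, its $t$-locality forces $\Ga_{\le t}^{G_1}(v) \ne \Ga_{\le t}^{G_2}(v)$, and a single edge modification can affect the $t$-neighbourhood of $v$ only if $\{x, y\}$ sits inside it or provides a shortcut relevant to it; in both situations both endpoints are within distance $t$ of $v$ in the larger graph, and a short check pins them within distance $t$ in $G_i$ as well. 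If $\cM_{\Delta, t}$ flipped, the offending high-degree vertex is either $x$ or $y$ itself (whose degree crossed $\Delta$ because of the modification) or a vertex dragged into the $t$-neighbourhood via the shortcut, and in both sub-cases $v$ ends up within distance $t$ of $x$ or $y$ in $G_i$.

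Once localization is in hand, the count is clean. Because $(G_i, v) \in \cQ$ forces every vertex of $\Ga_{\le t}^{G_i}(v)$ to have degree at most $\Delta$ in $G_i$, any shortest $v$-to-$x$ path in $G_i$ lies in $\Ga_{\le t}^{G_i}(v)$ and so traverses only degree-$\le\Delta$ vertices. The set of such $v$ is therefore contained in the ball of radius $t$ around $x$ in the degree-$\le\Delta$ subgraph of $G_i$, which has size at most $1 + \Delta + \Delta^2 + \dots + \Delta^t \le 2\Delta^t$ (since $\Delta \ge 2$). Summing over the two choices $x, y$ gives $4\Delta^t$; as this bounds both the number of $\cQ$-vertices lost and the number gained, $|N_\cQ(G_1) - N_\cQ(G_2)| \le 4\Delta^t$.

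The main obstacle I anticipate is the $\cM$-case of the localization step. The property $\cM_{\Delta, t}$ is genuinely $(t+1)$-local rather than $t$-local, since the degree of a boundary vertex at distance exactly $t$ from $v$ can change when an edge from it to a distance-$(t+1)$ neighbour is modified; a naive use of $(t+1)$-locality alone would suggest a $\Delta^{t+1}$ rather than $\Delta^t$ bound. The saving is that when the edge modification affects $\cM_{\Delta, t}$ via a shortcut dragging a new high-degree vertex into the $t$-neighbourhood, $v$ must actually be within distance $t-1$ of $x$ or $y$, which is tighter than the generic case and preserves the $\Delta^t$ rate. Writing the case analysis cleanly without a proliferation of sub-cases will take some care.
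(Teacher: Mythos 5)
Your proof is correct and follows the paper's outline quite closely: reduce to a single edge modification, localize the affected vertices to within distance $t$ of an endpoint of $e=xy$, and count them by following paths through degree-$\le\Delta$ vertices, from which the $16\Delta^t$-Lipschitz bound follows via four single-edge steps. The paper, however, avoids your $\cP$-versus-$\cM$ case split and the associated worry about $(t+1)$-locality by one clean observation: $\cM_{\Delta,t}$ is \emph{monotone decreasing} under edge deletion, so for any $v$ whose $\cQ$-status changes between $G$ and $G+e$, the graph $G$ automatically satisfies $(G,v)\in\cM_{\Delta,t}$. Combined with the fact that the distance from $v$ to $\{x,y\}$ is the same in $G$ and $G+e$ (a path reaching $x$ or $y$ through the new edge must reach the other endpoint first), and that this common distance must be at most $t$ for the status to change at all, one gets directly a path of length $\le t$ from $v$ to an endpoint of $e$ lying entirely inside the degree-$\le\Delta$ part of $G$, with no subcases. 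This single observation replaces your entire case analysis and sidesteps the $(t+1)$-versus-$t$ concern. The counting $1+\Delta+\cdots+\Delta^t\le 2\Delta^t$ per endpoint is then exactly as you describe.
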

\begin{proof}
The effect of a switching on the corresponding configuration multigraph is to
delete two edges and then add two edges (perhaps between the same vertices). 
Thus it suffices to prove the first statement.

Let $v$ be a vertex of $G$ such that one of $(G,v)$ and $(G+e,v)$ has property $\cQ$
but the other does not. Note that since $\cM_{\Delta,t}$ is monotone decreasing,
$(G,v)\in \cM_{\Delta,t}$. If $e=xy$, then the graph distance
from $v$ to $\{x,y\}$ is the same in $G$ and in $G+e$. Clearly, this distance
is at most $t$; otherwise the presence of $e$ would not affect the property $\cQ$. 
Hence, in $G$, at least one endpoint of $e$ is within distance $t$ of $v$,
so $v$ is joined to an endpoint of $e$ by a path in $G$ of length at most $t$ in which
(since $(G,v)\in \cM_{\Delta,t}$)
each vertex has degree at most $\Delta$. Each endpoint of $e$ is the end of at most
$(1+\Delta+\cdots+\Delta^t)\le 2\Delta^t$ such paths, so there can be at
most $4\Delta^t$ vertices $v$ with the claimed property.
\end{proof}

The next lemma shows that provided we choose $\Delta$ large enough, there is no harm
in considering only vertices whose local neighbourhoods contain only vertices
with degree at most $\Delta$.

\begin{lemma}\label{nobig}
Let $D\in\cD$, $t\ge 0$ and $\eps>0$ be given. Then there exist $\delta>0$ and
an integer $\Delta$ such that
\[
 \Pr( \cT_D \hbox{ has }\cM_{\Delta,t} ) \ge 1-\eps/10
\]
and 
\begin{equation}\label{Mconc}
 \Pr\bb{ N_{\cM_{\Delta,t}}(\Gm_\ds) \le n-\eps n/2 } \le e^{-\delta n}
\end{equation}
whenever $\dconf(\ds,D)<\delta$, where $n=|\ds|$.
\end{lemma}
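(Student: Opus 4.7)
The plan is to combine three ingredients already in hand: (a) the truncated Galton--Watson tree $\cT_D|_t$ is almost surely finite, which lets us pick $\Delta$ so that the first inequality is routine; (b) Corollary~\ref{c_prop1} applied to the local property $\cM_{\Delta,t}$, which pins down $\E(N_{\cM_{\Delta,t}}(\Gm_\ds))$ to within an additive $\eps n/10$ of $n$; and (c) Lemmas~\ref{Mlip} and~\ref{AHs}, which upgrade this to an exponentially concentrated lower bound.

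First I check that for each fixed $t$, the maximum degree of $\cT_D|_t$ is almost surely finite. Since $D$ is a proper probability distribution on the non-negative integers, the root of $\cT_D$ has a.s.\ finite degree, and since $Z=D^*-1$ is also a proper distribution (the $q_i$ sum to $1$), every non-root vertex of $\cT_D$ has a.s.\ finitely many children. A trivial induction on depth shows that $\cT_D|_t$ is a.s.\ finite, so the maximum degree appearing in $\cT_D|_t$ is also a.s.\ finite. Hence there is some $\Delta=\Delta(D,t,\eps)$ with $\Pr(\cT_D\in\cM_{\Delta,t})\ge 1-\eps/10$.

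With this $\Delta$ fixed, since $\cM_{\Delta,t}$ is a $(t+1)$-local property, Corollary~\ref{c_prop1} applied with error $\eps/10$ yields $\delta_1>0$ such that $\dconf(\ds,D)<\delta_1$ implies
\[
 \E\bb{N_{\cM_{\Delta,t}}(\Gm_\ds)} \ge (1-\eps/10)n - (\eps/10)n = (1-\eps/5)n.
\]
By Lemma~\ref{Mlip}, $N_{\cM_{\Delta,t}}(\Gm_\ds)$ is $C$-Lipschitz in the underlying pairing with $C=16\Delta^t$, and Lemma~\ref{AHs} gives
\[
 \Pr\Bb{\bigabs{N_{\cM_{\Delta,t}}(\Gm_\ds)-\E\bb{N_{\cM_{\Delta,t}}(\Gm_\ds)}}\ge \tfrac{3\eps}{10}n} \le 2\exp\bb{-\tfrac{9\eps^2 n^2}{400\, C^2\, m(\ds)}}.
\]
The triangle inequality applied to the defining sum of $\dconf^0$ gives $|2m(\ds)/n-\E(D)|<\delta_1$, so for $\delta_1\le 1$ we have $m(\ds)\le(\E(D)+1)n/2$; hence the exponent above is at most $-cn$ for some $c=c(D,t,\eps)>0$. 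Combined with the lower bound on the expectation, this yields $\Pr\bb{N_{\cM_{\Delta,t}}(\Gm_\ds)\le (1-\eps/2)n}\le 2e^{-cn}$, and taking $\delta=\min(\delta_1,c/4)$ absorbs the factor of $2$ via $n\ge 1/\delta$ (which is forced by $\dconf(\ds,D)<\delta$), delivering the claimed bound $e^{-\delta n}$.

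There is no substantive obstacle here; the lemma is essentially an assembly exercise given what has been proved in Section~\ref{bp}. The only detail worth flagging is the linear-in-$n$ upper bound on $m(\ds)$ needed to make Lemma~\ref{AHs} useful, but this is automatic from $\dconf(\ds,D)$ being small, by the triangle inequality.
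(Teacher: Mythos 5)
Your proof is correct and follows essentially the same route as the paper: pick $\Delta$ from a.s.\ finiteness of the maximum degree in $\cT_D|_t$, pin down $\E(N_{\cM_{\Delta,t}}(\Gm_\ds))$ via Corollary~\ref{c_prop1}, and concentrate via Lemma~\ref{Mlip} (with the trivial $t$-local $\cP$) together with Lemma~\ref{AHs}. The extra care you take in bounding $m(\ds)$ linearly in $n$ and absorbing the factor of $2$ is exactly the detail the paper leaves implicit, so there is nothing to quarrel with.
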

Thus for any given $t$ and $\eps$ there is a $\Delta$ such that with very high probability,
for $\dconf(\ds,D)$ small enough, at most $\eps n/2$ vertices of $\Gm_\ds$ are within
distance $t$ of a vertex with degree larger than $\Delta$.
\begin{proof}
The first statement is immediate from the fact that the random variable $M$ giving 
the maximum degree 
of any vertex of $\cT_D$ within distance $t$ of the root is always finite, so there is some
$\Delta$ such that $\Pr(M>\Delta)<\eps/10$. Corollary~\ref{c_prop1} implies that,
if $\delta$ is small enough, then $\dconf(\ds,D)<\delta$ implies
that $N=N_{\cM_{\Delta,t}}(\Gm_\ds)$ has expectation within
$\eps n/10$ of $n\Pr(\cT_D\in \cM_{\Delta,t})$, so $\E(N)\ge n-\eps n/5$.
By Lemma~\ref{Mlip}, applied with $\cP$ the `trivial' $t$-local
property that always holds, as a function of the pairing used to generate $\Gm_\ds$,
the quantity $N$ is $C$-Lipschitz
for some $C$. Now \eqref{Mconc} follows by Lemma~\ref{AHs}.
\end{proof}

We are now in a position to establish concentration of the number of vertices 
whose neighbourhoods have some local property.
\begin{theorem}\label{th_local1}
Let $\cP$ be a local property of rooted graphs, let $D\in \cD$ and let $\eps>0$.
There is some $\delta>0$ such that if $\dconf(\ds,D)<\delta$ then
\begin{equation}\label{totconc}
 \Pr\Bb{ \bigabs{
      N_\cP(\Gm_\ds)  - n\Pr(\cT_D \in \cP )
     } \ge \eps n }\le e^{-\delta n},
\end{equation}
where $n=|\ds|$.
\end{theorem}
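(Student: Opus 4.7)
The plan is to truncate the property $\cP$ by intersecting with $\cM_{\Delta,t}$ for a suitable large $\Delta$, which makes the resulting counting function Lipschitz (by Lemma~\ref{Mlip}), then apply Hoeffding--Azuma (Lemma~\ref{AHs}). The truncation introduces an error controlled by Lemma~\ref{nobig}, and the expectation is controlled by Corollary~\ref{c_prop1}.

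Fix $t$ so that $\cP$ is $t$-local. First I would apply Lemma~\ref{nobig} to produce a $\Delta$ (and a $\delta_1>0$) such that $\Pr(\cT_D \in \cM_{\Delta,t}) \ge 1-\eps/10$ and, whenever $\dconf(\ds,D)<\delta_1$,
\[
 \Pr\bb{ n - N_{\cM_{\Delta,t}}(\Gm_\ds) \ge \eps n/2 } \le e^{-\delta_1 n}.
\]
Set $\cQ = \cP \cap \cM_{\Delta,t}$; this is a $(t+1)$-local property contained in $\cP$. Since any vertex counted by $N_\cP$ but not by $N_\cQ$ must fail $\cM_{\Delta,t}$, the deterministic bound $0 \le N_\cP(\Gm_\ds) - N_\cQ(\Gm_\ds) \le n - N_{\cM_{\Delta,t}}(\Gm_\ds)$ holds, and hence $N_\cP - N_\cQ \le \eps n/2$ except on an event of probability $e^{-\delta_1 n}$. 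Also, $|\Pr(\cT_D\in\cP) - \Pr(\cT_D\in\cQ)| \le \Pr(\cT_D\notin\cM_{\Delta,t}) \le \eps/10$.

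Next I would apply Corollary~\ref{c_prop1} to the local property $\cQ$ (with tolerance $\eps n/10$) to pick $\delta_2>0$ so that $\dconf(\ds,D)<\delta_2$ implies $|\E N_\cQ(\Gm_\ds) - n\Pr(\cT_D\in\cQ)| \le \eps n/10$. Lemma~\ref{Mlip} shows that, as a function of the underlying pairing, $N_\cQ(\Gm_\ds)$ is $C$-Lipschitz for $C = 16\Delta^{t+1}$ (the $t+1$ coming from the locality of $\cQ$). Since $\dconf(\ds,D)<\delta$ with $\delta$ small forces $m(\ds) \le Kn$ for a constant $K=K(D)$, Lemma~\ref{AHs} applied with deviation $\eps n/10$ yields
\[
 \Pr\Bb{ \bigabs{N_\cQ(\Gm_\ds) - \E N_\cQ(\Gm_\ds)} \ge \eps n/10 } \le 2\exp\bb{-c n}
\]
for some $c=c(D,\eps,\cP)>0$.

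Finally, I would combine these three ingredients by the triangle inequality: with probability at least $1 - e^{-\delta_1 n} - 2e^{-cn}$, the quantity $N_\cP(\Gm_\ds)$ is within $\eps n/2 + \eps n/10 + \eps n/10 + \eps n/10 < \eps n$ of $n\Pr(\cT_D\in\cP)$. Taking $\delta$ smaller than $\delta_1$, $\delta_2$, and $c/2$, and reducing further to absorb the constant factor $2$, gives the bound $e^{-\delta n}$ claimed in \eqref{totconc}. The one step that requires a little care is the Lipschitz constant: naively $N_\cP$ is not Lipschitz at all because a single switching can alter arbitrarily large neighbourhoods via a high-degree vertex, and this is exactly the obstacle that the $\cM_{\Delta,t}$ truncation is designed to remove.
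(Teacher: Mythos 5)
Your proposal is correct and follows essentially the same route as the paper's own proof: truncate $\cP$ to $\cQ=\cP\cap\cM_{\Delta,t}$, control the truncation error via Lemma~\ref{nobig} (both for $\Gm_\ds$ and for $\cT_D$), get the mean from Corollary~\ref{c_prop1}, get concentration from Lemma~\ref{Mlip} plus Lemma~\ref{AHs}, and finish with the triangle inequality. (One cosmetic remark: Lemma~\ref{Mlip} is stated precisely for $\cQ=\cP\cap\cM_{\Delta,t}$ with $\cP$ $t$-local and already gives the Lipschitz constant $16\Delta^t$, so you need not pass to $(t+1)$-locality to invoke it.)
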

\begin{proof}
Let $D\in \cD$, $\eps>0$ and a $t$-local property $\cP$ be given,
and let $\Delta$ be as in Lemma~\ref{nobig}.
Let us say that an event holds \emph{with very high probability} or \emph{wvhp}
if for some constant $\delta>0$
it has probability at least $1-e^{-\delta n}$ whenever $\dconf(\ds,D)<\delta$.
So in particular, Lemma~\ref{nobig} tells us that wvhp all but at most $\eps n/2$
vertices of $G=\Gm_\ds$ have property $\cM=\cM_{\Delta,t}$.

Let $N=N_{\cP}(G)$ be the number of vertices with property $\cP$,
let $B=n-N_{\cM}(G)$ be the number of `bad' vertices, i.e, ones not having property $\cM$,
and let $N'=N_{\cP\cap\cM}$ be the number of `good' vertices with property $\cP$. Then,
wvhp,
\[
 |N-N'|\le B \le \eps n/2.
\]
By the first part of Lemma~\ref{nobig}, we have
\[
 |\Pr(\cT_D\in \cP) - \Pr(\cT_D\in \cP\cap \cM)| \le \Pr(\cT_D\notin \cM) \le \eps /10.
\]
By Lemma~\ref{Mlip}, $N'$ is $C$-switching Lipschitz for some constant $C$, so by
Corollary~\ref{c_prop1} and Lemma~\ref{AHs}, we have that wvhp
\[
 |N'-n\Pr(\cT_D\in\cP\cap\cM)| \le \eps n/10,
\]
say. The last three displayed equations and the triangle inequality establish \eqref{totconc}. 
\end{proof}

\begin{corollary}\label{c_mrk_conc}
Let $D\in \cD$, and let $k\ge 1$ and $\eps>0$ be given. Then there exists $\delta>0$
such that if $\dconf(\ds,D)<\delta$ then
\begin{equation}\label{mrkc}
 \Pr\Bb{\bigabs{ N_k - \rho_k n} \ge \eps n} \le e^{-\delta n}
\end{equation}
where $n=|\ds|$, $N_k=N_k(\Gm_\ds)$ and $\rho_k=\Pr(|\cT_D|=k)$.
\end{corollary}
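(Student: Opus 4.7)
The plan is to reduce Corollary \ref{c_mrk_conc} directly to Theorem \ref{th_local1} by taking the property $\cP = \cP_k$ and invoking the identities in \eqref{trans}. The content is therefore concentrated entirely in verifying that $\cP_k$ is a local property of rooted graphs, after which the result is essentially immediate.

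First I would check that $\cP_k$ is $k$-local, i.e.\ that whether the root $v$ lies in a component of size exactly $k$ can be read off from $\Ga_{\le k}^G(v)$. The reason is that any connected subgraph on $k$ vertices containing $v$ has radius at most $k-1$ from $v$, so if $|C(v)| \le k$, where $C(v)$ is the component of $v$ in $G$, then $C(v) \subseteq \Ga_{\le k-1}^G(v) \subseteq \Ga_{\le k}^G(v)$. Conversely, if $|C(v)| > k$, then inspection of $\Ga_{\le k}^G(v)$ reveals either at least $k+1$ vertices in the component of $v$ in $\Ga_{\le k}^G(v)$, or else a vertex at distance $k-1$ with a further neighbour at distance $k$, so in either case the component of $v$ in $\Ga_{\le k}^G(v)$ has size greater than $k$. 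Hence $|C(v)| = k$ is determined by $\Ga_{\le k}^G(v)$, and $\cP_k$ is $k$-local.

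Once this is established, Theorem \ref{th_local1} applied with $\cP = \cP_k$ yields some $\delta > 0$ such that if $\dconf(\ds, D) < \delta$ then
\[
 \Pr\Bb{\bigabs{N_{\cP_k}(\Gm_\ds) - n\Pr(\cT_D \in \cP_k)} \ge \eps n} \le e^{-\delta n}.
\]
By \eqref{trans} we have $N_{\cP_k}(\Gm_\ds) = N_k(\Gm_\ds)$ and $\Pr(\cT_D \in \cP_k) = \rho_k(D)$, which substitutes directly into the display above to give \eqref{mrkc}. There is no genuine obstacle here: the heavy lifting (the branching process coupling, the Lipschitz property for truncated local properties, and the Hoeffding--Azuma concentration) has all been packaged into Theorem \ref{th_local1}, and the only work left is the essentially combinatorial observation that componenthood at scale $k$ is a $k$-local property.
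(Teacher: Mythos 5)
Your proof is correct and follows essentially the same route as the paper: recall \eqref{trans} and apply Theorem~\ref{th_local1} with $\cP = \cP_k$. The only difference is that you spell out the (easy, correct) verification that $\cP_k$ is $k$-local, which the paper takes as read when it introduces $\cP_k$ just before \eqref{trans}.
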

\begin{proof}
Recall \eqref{trans} and apply Theorem~\ref{th_local1} to the property $\cP_k$.
\end{proof}

This corollary proves the first statement \eqref{th2k} of Theorem~\ref{th2}, and
hence the corresponding statement in Theorem~\ref{th1}. One can obtain an explicit
constant in the exponential error probability in \eqref{mrkc} by using that
$N_k$ is $4k$-Lipschitz, but there does not seem to be much point.

To conclude this section, we note that, as usual, summing over $k'<k$ and subtracting from $n$,
bounds on $N_k$ with $k$ fixed give
bounds on $N_{\ge k}$ as well, where $N_{\ge k}(G)$ denotes the number
of vertices of a graph $G$ in components of order at least $k$.

\begin{lemma}\label{lNgekD}
Let $D\in\cD$, $\eps>0$ and $K$ be given.
There exist $k\ge K$ and $\delta>0$ such that if $\dconf(\ds,D)<\delta$ then
\begin{equation}\label{mroc}
 \Pr\Bb{ \bigabs{N_{\ge k} - \rho(D) n} \ge\eps n} \le e^{-\delta n},
\end{equation}
where $n=|\ds|$, $N_{\ge k}=N_{\ge k}(\Gm_\ds)$ and $\rho(D)=\Pr(\cT_D\text{ is infinite})$.
\end{lemma}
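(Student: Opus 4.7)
The plan is to write $N_{\ge k}$ as a telescoping difference and reduce the statement to Corollary~\ref{c_mrk_conc}. Observe that $\sum_{k'=1}^\infty \rho_{k'}(D) = \Pr(|\cT_D|<\infty) = 1-\rho(D)$, so the tail $\sum_{k'\ge k}\rho_{k'}(D)$ tends to $0$ as $k\to\infty$. Hence I choose $k\ge K$ large enough that
\[
 \sum_{k'\ge k}\rho_{k'}(D) < \eps/2.
\]
This choice of $k$ is fixed from now on.

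Next I use the identities $N_{\ge k}(\Gm_\ds)=n-\sum_{k'<k}N_{k'}(\Gm_\ds)$ and $\rho(D)=1-\sum_{k'<k}\rho_{k'}(D)-\sum_{k'\ge k}\rho_{k'}(D)$, which together give
\[
 N_{\ge k}-\rho(D)n = \sum_{k'<k}\bb{\rho_{k'}(D)n-N_{k'}} + n\sum_{k'\ge k}\rho_{k'}(D).
\]
The second term is, in absolute value, at most $\eps n/2$ by the choice of $k$. To control the first term, I apply Corollary~\ref{c_mrk_conc} to each $k'\in\{1,\ldots,k-1\}$ with error parameter $\eps/(2k)$, yielding a $\delta_{k'}>0$ such that $\dconf(\ds,D)<\delta_{k'}$ implies $\Pr(|N_{k'}-\rho_{k'}n|\ge \eps n/(2k))\le e^{-\delta_{k'}n}$. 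Let $\delta_0=\min_{k'<k}\delta_{k'}>0$ and take a union bound over the $k-1$ events: if $\dconf(\ds,D)<\delta_0$, then with probability at least $1-ke^{-\delta_0 n}$ we have $\sum_{k'<k}|\rho_{k'}n-N_{k'}|\le \eps n/2$, so by the triangle inequality $|N_{\ge k}-\rho(D)n|\le \eps n$.

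Finally I absorb the factor of $k$ in the union bound by shrinking the constant: choose $\delta=\delta_0/(1+\log k)$ (say). Then $\dconf(\ds,D)<\delta$ forces $n>1/\delta\ge (1+\log k)/\delta_0$, which is enough to guarantee $ke^{-\delta_0 n}\le e^{-\delta n}$, establishing \eqref{mroc}.

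The argument has no serious obstacle; the only mildly delicate point is the bookkeeping needed to show that the union-bound loss of a factor of $k$ can be absorbed into an exponential bound of the required form, which is handled by taking $\delta$ small enough relative to $\delta_0$ and using the trivial lower bound $n\ge 1/\dconf(\ds,D)>1/\delta$ built into the definition of $\dconf$.
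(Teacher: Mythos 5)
Your proof is correct and follows essentially the same route as the paper's: choose $k\ge K$ so the tail $\sum_{k'\ge k}\rho_{k'}<\eps/2$, apply Corollary~\ref{c_mrk_conc} to each $k'<k$ with error parameter $\eps/(2k)$, and union-bound. The paper leaves the union-bound bookkeeping implicit; your explicit handling of the factor $k$ (via the trivial bound $n>1/\delta$ built into $\dconf$) is a clean way to fill in that small gap.
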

\begin{proof}
Since $\sum_k \rho_k(D)=\Pr(|\cT_D|<\infty)=1-\rho(D)$,
there is some $k\ge K$ such that $\sum_{k'=1}^{k-1} \rho_{k'}$ is within $\eps/2$
of $1-\rho(D)$. The result follows by applying Lemma~\ref{c_mrk_conc} for each $k'\le k-1$,
with $\eps/(2k)$ in place of $\eps$.
\end{proof}

As usual, the result for $k$ fixed extends to the case when $k\to\infty$
slowly, showing, roughly speaking, that the probability
that the branching process $\cT_D$ is infinite gives the asymptotic
proportion of vertices in `large' components.

\section{The survival probability $\rho(D)$}\label{sec_surv}

In this brief section we discuss the behaviour of the survival probability
$\rho(D)$ of the branching process $\cT_D$. The result below is needed
in the next section, but also helps to interpret Theorems~\ref{th1} and~\ref{th2}.

Recall that from generation $1$ onwards, $\cT_D$ behaves like the Galton--Watson
branching process $\cT_D^1$ with offspring distribution $Z_D$ defined by \eqref{Zdef},
and that $\cT_D$ simply consists of a random number $N$ of copies of $\cT_D^1$,
with $N$ having the distribution $D$.
\begin{theorem}\label{rhoD}
Let $D$ be any distribution on the non-negative integers with $\Pr(D\ge 3)>0$ and $\E(D)<\infty$.
Then $\rho(D)>0$ if and only if $\E(D(D-2))>0$.
Furthermore, writing $x_+$ for the largest solution in $[0,1]$ to
\begin{equation}\label{xdef}
 x = 1 - \sum_{i=1}^\infty \frac{\iri}{\E(D)} (1-x)^{i-1},
\end{equation}
where $r_i=\Pr(D=i)$, we have
\begin{equation}\label{rd}
 \rho(D) = 1- \sum_{i=0}^\infty r_i (1-x_+)^i.
\end{equation}

Finally, suppose that $D_1,D_2,\ldots$ are distributions on
the non-negative integers such that $D_n\to D$ in distribution
and $\E(D_n)\to \E(D)$. Then $\rho(D_n)\to\rho(D)$ as $n\to\infty$.
\end{theorem}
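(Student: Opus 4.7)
The plan is to reduce everything to the classical theory of the Galton--Watson process $\cT_D^1$ with offspring distribution $Z$ from~\eqref{Zdef}. Since $\cT_D$ consists of a root with $D$ independent copies of $\cT_D^1$ attached, writing $\rho^1:=\Pr(\cT_D^1\text{ is infinite})$ and conditioning on the degree of the root gives
\[
 \rho(D)=1-\E\bb{(1-\rho^1)^D}=1-\sum_{i\ge 0}r_i(1-\rho^1)^i.
\]
Classically, $1-\rho^1$ is the smallest fixed point in $[0,1]$ of the probability generating function $g_Z(s)=\sum_{i\ge 1}\frac{\iri}{\E(D)}s^{i-1}$, and the substitution $s=1-x$ rewrites $s=g_Z(s)$ as~\eqref{xdef}; so the largest solution $x_+\in[0,1]$ of~\eqref{xdef} equals $\rho^1$, and substituting into the displayed equation yields~\eqref{rd}.

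For the iff, since $\Pr(D\ge 3)>0$ we have $\Pr(Z\ge 2)>0$, so $g_Z$ is strictly convex on $[0,1]$ and classical theory gives $\rho^1>0\Leftrightarrow\E(Z)>1$. A direct computation of $\E(Z)=\E(D^*)-1=\E(D(D-1))/\E(D)$ then shows $\E(Z)>1\Leftrightarrow\E(D(D-2))>0$. Since $\E(D)>0$ forces some $r_i>0$ with $i\ge 1$, \eqref{rd} yields $\rho(D)>0\Leftrightarrow x_+>0\Leftrightarrow\rho^1>0$.

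For the continuity statement, $D_n\to D$ in distribution together with $\E(D_n)\to\E(D)>0$ implies $\Pr(Z_n=i)=(i+1)\Pr(D_n=i+1)/\E(D_n)\to\Pr(Z=i)$ for each $i$; by Scheff\'e this upgrades to total-variation convergence, which in turn yields uniform convergence on $[0,1]$ of both $g_{Z_n}\to g_Z$ and $h_{D_n}(y):=\E(y^{D_n})\to h_D(y)=\sum_i r_i y^i$. Since \eqref{rd} reads $\rho(D_n)=1-h_{D_n}(1-x_+(D_n))$, uniform convergence of $h_{D_n}$ and continuity of $h_D$ reduce the problem to showing $x_+(D_n)\to x_+(D)$. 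Set $y_\star=1-x_+(D)$. For $\liminf x_+(D_n)\ge x_+(D)$, if $x_+(D)=0$ there is nothing to prove; otherwise pick $x'\in(0,x_+(D))$ and put $y'=1-x'>y_\star$, so $g_Z(y')<y'$ by strict convexity of $g_Z$ (using $\Pr(D\ge 3)>0$); uniform convergence gives $g_{Z_n}(y')<y'$ for $n$ large, and since $y\mapsto g_{Z_n}(y)-y$ is convex on $[0,1]$ and vanishes at $1$, this sign change forces a second zero in $[0,y')$, which must equal $1-x_+(D_n)$, so $x_+(D_n)>x'$. A symmetric argument using $x''\in(x_+(D),1)$ and $y''=1-x''<y_\star$, where $g_Z(y'')>y''$, gives $\limsup x_+(D_n)\le x_+(D)$. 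The main obstacle is that convergence in distribution together with convergence of means does not imply convergence of second moments, so $\E(Z_n)$ need not converge to $\E(Z)$: we cannot conclude ``supercriticality (or subcriticality) in the limit implies the same eventually'' from offspring means alone, and the convexity-plus-uniform-convergence argument above bypasses this by controlling the location of the nontrivial fixed point of $g_{Z_n}$ directly, independently of the sign of $\E(Z_n)-1$.
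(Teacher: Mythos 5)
Your proof is correct and follows essentially the same route as the paper: decompose $\cT_D$ into $D$ independent copies of the Galton--Watson tree $\cT_D^1$, identify $x_+$ with the survival probability $\rho^1$ of $\cT_D^1$ via the pgf fixed-point equation, and deduce both the formula \eqref{rd} and the criterion $\E(Z)>1\Leftrightarrow\E(D(D-2))>0$. For the continuity part the paper simply cites ``standard branching process results''; your explicit convexity argument (Scheff\'e to get uniform convergence of $g_{Z_n}$, then trapping the nontrivial fixed point) is a correct and careful spelling-out of exactly that standard fact, and your observation that $\E(Z_n)$ need not converge to $\E(Z)$ is the right reason one must argue via the fixed-point location rather than via the offspring mean.
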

\begin{proof}
Standard results on Galton--Watson processes
tell us that the survival probability of $\cT_D^1$ is equal to $x_+$, the largest solution
in $[0,1]$ to \eqref{xdef}. Furthermore, since
$\Pr(D\ge 3)>0$ rules out the trivial case $\Pr(Z_D=1)=1$,
we have $x_+>0$ if and only if $\E(Z_D)>1$. Conditioning on the number $N$
of children of the root of $\cT_D$ gives \eqref{rd}
as an immediate consequence, and shows that
$\rho(D)>0$ if and only if $x_+>0$, i.e., if and only if $\E(Z_D)>1$.
Since $\E(Z_D)=\sum_i (i-1)\Pr(Z_D=i-1)=\sum_i i(i-1)r_i/\sum_i ir_i$,
this condition is equivalent to $\sum_i i(i-2)r_i>0$.

For the last part, define $Z_{D_n}$ from $D_n$ as in \eqref{Zdef},
i.e., by size-biasing and then subtracting 1.
Since $\Pr(D_n=i)\to r_i$ and $\E(D_n)\to \E(D)$,
we have $\Pr(Z_{D_n}= i)\to \Pr(Z_D=i)$.
Standard branching process results then imply
that the survival probability of $\cT^1_{D_n}$ converges
to that of $\cT^1_D$. Using \eqref{rd}, it follows easily that $\rho(D_n)\to \rho(D)$.
\end{proof}

\begin{remark}
The formulae above coincide (as they must) with those given by Molloy and
Reed~\cite{MolloyReed2} -- one can check that $x_+$ is equal to $1-\sqrt{1-2\alpha_D/K}$ in
their notation. They did not use the branching process
interpretation, however. In the notation of Janson and Luczak~\cite{JL_new},
$x_+$ is $1-\xi$, and $\rho(D)$ is $1-g(\xi)$.
\end{remark}

\section{Colouring and sprinkling}

Our next task is to use `sprinkling' to show that whp almost all vertices in
`large' components are in a single `giant' component.  In the original context
of the random graphs $G(n,p)$ and $G(n,m)$, sprinkling is very simple to
implement -- first include each edge independently with probability $p_1$, then
`sprinkle' in extra edges by including each edge not already present
independently with probability $p_2$, where $p_1+p_2-p_1p_1=p$. In the context
of the configuration model, there is no very simple analogue of this. Instead,
we will `thin' the random graph $\Gm_\ds$, and then put
back the deleted edges.

Given $0<p<1$, let $G'=\Gm_\ds[p]$ denote
the random subgraph of $G=\Gm_\ds$ obtained by retaining
each edge independently with probability $p$, 
and let $G''$ be the multigraph formed by the deleted edges,
so $V(G'')=V(G')=V(G)$ and $E(G)$ is the disjoint union of $E(G')$ and $E(G'')$.
Let $\ds'$ be the (random, of course)
degree sequence of $G'$, and $\ds''$ that of $G''$,
so $d'_i+d''_i=d_i$ for each vertex $i\in V(G)$. The following simple observation
is a key ingredient of the sprinkling argument.

\begin{lemma}\label{redblue}
For any $\ds$ and any $0<p<1$, the random graphs $G'$ and $G''$ are conditionally
independent given $\ds'$, having the distributions of $\Gm_{\ds'}$ and $\Gm_{\ds''}$
respectively.
\end{lemma}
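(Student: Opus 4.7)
The plan is to work at the level of pairings and realise the edge-thinning by a coin flip on each pair. I would write $G=\phi_\ds(\pi)$ where $\pi$ is a uniformly random pairing of $F=\bigsqcup_{i=1}^\ell F_i$, and couple the thinning with an independent colouring: colour each pair of $\pi$ red with probability $p$ and blue with probability $1-p$, independently. Let $\pi'$ (resp.\ $\pi''$) be the set of red (resp.\ blue) pairs; these are pairings of complementary subsets $F'\sqcup F''=F$, and by construction $G'=\phi_\ds(\pi')$ and $G''=\phi_\ds(\pi'')$.

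The key probability computation is the following. Writing $\Pi(S)$ for the set of pairings of a finite even-sized set $S$, for any partition $F'\sqcup F''=F$ into even-sized pieces and any $\sigma'\in\Pi(F')$, $\sigma''\in\Pi(F'')$, the pair $(\sigma',\sigma'')$ arises from the unique pairing $\sigma'\cup\sigma''$ of $F$ together with the unique colouring that splits it this way, so
\[
 \Pr(\pi'=\sigma',\ \pi''=\sigma'') \;=\; \frac{p^{|F'|/2}(1-p)^{|F''|/2}}{|\Pi(F)|}.
\]
Summing out $\sigma'$ and $\sigma''$ shows that $\Pr(F'=T)$ depends on $T$ only through $|T|$, and that conditional on the random partition $(F',F'')$, the pairings $\pi'$ and $\pi''$ are independent and uniform on $\Pi(F')$ and $\Pi(F'')$ respectively.

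From here, the reduction to $\ds'$ proceeds by symmetry. Since the degree sequence $\ds'$ is the function $d'_i=|F'\cap F_i|$ of $F'$, and $\Pr(F'=T)$ depends only on $|T|$, conditioning on $\ds'$ makes $F'$ uniform among subsets of $F$ intersecting each $F_i$ in $d'_i$ stubs; equivalently, $F'\cap F_i$ is a uniformly random $d'_i$-subset of $F_i$, independently over $i$. Given $F'$, the pairing $\pi'$ is uniform on $\Pi(F')$, and any choice of bijections $F'\cap F_i\to\{1,\dots,d'_i\}$ will push $\pi'$ forward to a uniform pairing of the standard stub set for $\ds'$; since these bijections respect the vertex partition, the resulting multigraph on $[\ell]$ coincides with $\phi_\ds(\pi')=G'$. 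Hence $G'\sim\Gm_{\ds'}$ conditional on $\ds'$; the same argument yields $G''\sim\Gm_{\ds''}$ (with $\ds''$ determined by $\ds$ and $\ds'$), and conditional independence is inherited from the independence of $\pi'$ and $\pi''$ given $(F',F'')$ by averaging over $F'$ compatible with $\ds'$. There is no real obstacle here; the only step that requires genuine care is this last relabelling, which amounts to the obvious vertex-by-vertex symmetry of the configuration model.
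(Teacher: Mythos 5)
Your proof is correct and takes essentially the same approach as the paper: both realise $(G',G'')$ by colouring the pairs of a uniform pairing $\pi$ red/blue independently with probability $p$, and both deduce that conditional on the colour classes the red and blue sub-pairings are independent and uniform, whence $(G',G'')\sim(\Gm_{\ds'},\Gm_{\ds''})$. You simply spell out the bookkeeping (the explicit joint probability of $(\pi',\pi'')$ and the relabelling of the red stubs to the canonical stub set of $\ds'$) that the paper compresses into the phrase ``essentially immediate from the definition of the configuration model.''
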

\begin{proof}
This is essentially immediate from the definition of the configuration model: recall
that $G$ is defined from a pairing $\pairing$ of a set of $2m(\ds)$ stubs. Given this
pairing, colour each pair red with probability $p$ and blue otherwise, independently
of the others. Then we may take $G'$ to be given by the red pairs and $G''$ by the blue
pairs. Clearly, given the set of stubs in red pairs (which determines $\ds'$ and thus $\ds''$),
the pairing of these red stubs is uniformly random, and similarly for the blue stubs.
\end{proof}

Our next aim is to extend the coupling result Theorem~\ref{th_local1} to the
pair $(G',G'')$. First we need some definitions. We shall work with $2$-coloured multigraphs
(rather than coloured pairings as above).
Given a degree sequence $\ds$
and $0<p<1$, let $\Gm_\ds\{p\}$ denote the random \emph{coloured} graph
obtained by constructing $\Gm_\ds$ and then colouring the edges independently,
each red with probability $p$ and blue otherwise. Thus $G'=\Gm_\ds[p]$
may be viewed as the red subgraph of $\Gm_\ds\{p\}$.  Similarly, let
$\cT_D\{p\}$ be the random coloured rooted tree obtained from $\cT_D$ by
colouring each edge red with probability $p$ and blue otherwise, independently
of the others.

Given a probability distribution $D$ on the non-negative integers, and $0<p<1$,
let $D_p$ be the \emph{$p$-thinned} version of $D$, which may be defined by
taking a random set $X$ of size $D$ and selecting elements of $X$ independently
with probability $p$.
Then $D_p$ is the (overall) distribution of the number of
selected elements. To spell this out, and for later reference,
writing $r_i=\Pr(D=i)$ as usual, for $0\le i\le j$ let
\begin{equation}\label{rijdef}
 r_{ij} = r_j\binom{j}{i}p^i(1-p)^{j-i},
\end{equation}
and let
\begin{equation}\label{pi'}
 r_i' = \sum_{j\ge i} r_{ij}.
\end{equation}
Then
\begin{equation}\label{Dpdef}
  \Pr(D_p=i) = r_i'.
\end{equation}
It is a simple exercise in basic probability to check
that the operations of (i) $p$-thinning and (ii) size-biasing and then subtracting $1$
commute. A simple consequence of this is that the component of the red subgraph
of $\cT_D\{p\}$ containing the root has the same distribution as $\cT_{D_p}$.

The next result concerns `local properties of coloured rooted graphs', which are
defined in the obvious way.

\begin{theorem}\label{th_localc}
Let $\cP$ be a local property of coloured rooted graphs, let $D\in \cD$, let $\eps>0$ and
let $0<p<1$.
There is some $\delta>0$ such that if $\dconf(\ds,D)<\delta$ then
\begin{equation}\label{totconcc}
 \Pr\Bb{ \bigabs{
      N_\cP(\Gm_\ds\{p\})  - n\Pr(\cT_D\{p\} \in \cP )
     } \ge \eps n }\le e^{-\delta n},
\end{equation}
where $n=|\ds|$. Furthermore, if $\cQ$ is a local property of rooted graphs,
then there is some $\delta>0$ such that if $\dconf(\ds,D)<\delta$ then
\begin{equation}\label{conQ}
 \Pr\Bb{ \bigabs{
      N_\cQ(\Gm_\ds[p])  - n\Pr(\cT_{D_p} \in \cQ )
     } \ge \eps n }\le e^{-\delta n}.
\end{equation}
\end{theorem}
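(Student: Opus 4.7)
The plan is to establish \eqref{totconcc} by adapting the proof of Theorem~\ref{th_local1} to the coloured setting, and then deduce \eqref{conQ} as an immediate consequence. First, I would generalise Lemma~\ref{l_couple} to coloured graphs: when revealing $\Ga_{\le t}^{G}(v)$ for $G = \Gm_\ds$ step by step, I also reveal the colour of each newly exposed edge as an independent $\mathrm{Bernoulli}(p)$ variable, coupling with the coloured branching process $\cT_D\{p\}|_t$. The pairing coupling is unchanged and the colours are independent of everything else, so the coupling succeeds at each of the boundedly many steps with probability $1-o(1)$. The argument of Corollary~\ref{c_prop1} then yields that, for each $\eps'>0$, there is some $\delta'>0$ such that $\dconf(\ds,D)<\delta'$ implies
\[
 \bigabs{\E N_\cP(\Gm_\ds\{p\}) - n\Pr(\cT_D\{p\}\in \cP)} \le \eps' n.
\]

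For concentration I would apply Lemma~\ref{nobig} to handle high-degree vertices: since $\cM_{\Delta,t}$ is a property of the underlying uncoloured graph, the same statement holds, and wvhp all but at most $\eps n/4$ vertices of $\Gm_\ds\{p\}$ satisfy $\cM_{\Delta,t}$. Setting $\cP' = \cP \cap \cM_{\Delta,t}$, I would argue that $N_{\cP'}(\Gm_\ds\{p\})$, viewed as a function of the coloured pairing $(\pairing, c)$, is $O(\Delta^t)$-Lipschitz with respect to both (i) switchings of $\pairing$ (as in Lemma~\ref{Mlip}) and (ii) flipping the colour of a single edge. The argument for (ii) mirrors that for (i): flipping the colour of $e = xy$ changes no degrees, and affects the coloured property at $v$ only if $e$ lies within the $t$-neighbourhood of $v$, which for $v \in \cM_{\Delta,t}$ restricts $v$ to at most $4\Delta^t$ vertices reachable from $\{x, y\}$ by paths of length $\le t$ through degree-$\le \Delta$ vertices. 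Concentration then follows from a combined martingale that sequentially reveals the partners of $s_1, \ldots, s_{2m}$ and the colours of the resulting pairs: each of the $O(n)$ steps has bounded difference $O(\Delta^t)$, and Azuma's inequality gives the desired $e^{-\delta n}$ tail on deviations from the mean. Combining with the expectation bound and the bad-vertex bound yields \eqref{totconcc}.

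For \eqref{conQ}, I would observe that the property ``$(\Gm_\ds[p], v) \in \cQ$'' is itself a $t$-local coloured property of $\Gm_\ds\{p\}$: because $\Gm_\ds[p]$ is a subgraph of $\Gm_\ds$, every vertex within red-distance $\le t$ of $v$ lies within full-distance $\le t$ of $v$, so $\Ga_{\le t}^{\Gm_\ds[p]}(v)$ is determined by the coloured $t$-neighbourhood of $v$ in $\Gm_\ds\{p\}$. Writing $\cP_\cQ$ for this coloured property, we have $N_\cQ(\Gm_\ds[p]) = N_{\cP_\cQ}(\Gm_\ds\{p\})$, and the fact noted in the text that the root's red component in $\cT_D\{p\}$ is distributed as $\cT_{D_p}$ gives $\Pr(\cT_D\{p\} \in \cP_\cQ) = \Pr(\cT_{D_p} \in \cQ)$; so \eqref{conQ} follows from \eqref{totconcc} applied to $\cP_\cQ$.

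The main obstacle in the proof is verifying that the two independent sources of randomness, the pairing and the colouring, can be handled jointly by a single bounded-difference martingale. This is resolved by observing that switchings and colour flips have the same local-influence bound $O(\Delta^t)$ once one restricts to $\cM_{\Delta,t}$, so revealing partners and colours sequentially yields an $O(n)$-step martingale with bounded differences, and a single Azuma-type argument suffices.
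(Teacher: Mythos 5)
Your proposal follows essentially the same route as the paper: a coloured version of the coupling lemma yielding a coloured analogue of Corollary~\ref{c_prop1}, restriction to $\cP\cap\cM_{\Delta,t}$ to control Lipschitz constants under both switchings and single-edge colour flips, a combined Hoeffding--Azuma martingale over partners and colours, and deduction of \eqref{conQ} by applying \eqref{totconcc} to the coloured property ``the red component of the root has $\cQ$'', using that the red component of the root in $\cT_D\{p\}$ is distributed as $\cT_{D_p}$. The argument is correct.
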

\begin{proof}
From the remarks above, it suffices to prove the first statement, \eqref{totconcc}.
Then \eqref{conQ} may be deduced by applying \eqref{totconcc} to the
local property $\cP$ that the component of the red graph containing the root has
property $\cQ$.
We give only an outline proof of \eqref{totconcc}, since the argument is a simple modification
of that of Theorem~\ref{th_local1}.

Firstly, the coloured analogue of Lemma~\ref{l_couple} follows from Lemma~\ref{l_couple}: when the 
coupling as uncoloured graphs succeeds, we may apply the same (random) colouring
to $\Ga_{\le t}(v)$ as to $\cT_D|_t$. Arguing as before, we deduce the coloured analogue of
Corollary~\ref{c_prop1}.
Now $N_{\cP}(\Gm_\ds\{p\})$
depends not only on the configuration, but also on the colouring. However,
passing to a property $\cQ=\cP\cap\cM_{\Delta,t}$ as in the proof of Theorem~\ref{th_localc},
by a variant of Lemma~\ref{Mlip}
we see that $N_{\cQ}$ changes by at most a constant (a) under a switching and (b) under
changing the colour of a single edge. Now we can apply the Hoeffding--Azuma inequality to a martingale
with $2m$ steps for the switchings and $m$ for the colour choices, where $m=m(\ds)$
is the number of edges of $\Gm_\ds$, to deduce concentration of $N_{\cP}(\Gm_\ds\{p\})$ and complete
the proof.
\end{proof}

Recall that $n_i=n_i(\ds)$ is the number of vertices with degree $i$ in $G=\Gm_\ds$.
Let $n_i'$ be the number of vertices with degree $i$ in the random subgraph $G'=G[p]$
defined earlier. Also, 
for $0\le i\le j$, let $n_{ij}$ be the number of vertices with degree $i$
in $G'$ and degree $j$ in $G$. Thus $n_i'=\sum_{j\ge i}n_{ij}$. Recall the definitions
\eqref{rijdef} and \eqref{pi'}; at an intuitive level these formulae give
the expected proportions of vertices of $G'$
having degree $i$ (for $r_i'$) and having degree $i$ in $G'$ and degree
$j$ in $G$, ignoring the effect of loops. Hence the next lemma comes as no surprise.

\begin{lemma}\label{pij}
Let $D\in \cD$ and $0<p<1$ be fixed.
Given $0\le i\le j$ and $\eps>0$
there exists $\delta>0$ such that if $\dconf(\ds,D)<\delta$ then
\[
 \Pr\bb{ |n_{ij}-r_{ij}n| \ge \eps n} \le e^{-\delta n}
\]
and
\[
 \Pr\bb{ |n_i'-r_i'n| \ge \eps n} \le e^{-\delta n},
\]
where $n=|\ds|$.
\end{lemma}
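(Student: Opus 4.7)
The plan is to deduce both bounds from Theorem~\ref{th_localc} by recognizing $n_{ij}$ and $n_i'$ as counts of vertices satisfying a suitable local coloured property, up to a small correction for vertices carrying a loop. For the first bound I would introduce the $1$-local coloured property $\cP_{ij}^\circ$ that the root has degree exactly $j$ in the underlying multigraph, is incident to no loops, and has exactly $i$ of its incidences red. Since $\cT_D\{p\}$ is loop-free and its root has degree $j$ with probability $r_j$, after which its $j$ incidences are independently red with probability $p$, we have $\Pr(\cT_D\{p\}\in\cP_{ij}^\circ)=r_j\binom{j}{i}p^i(1-p)^{j-i}=r_{ij}$. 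Theorem~\ref{th_localc} then yields $|N_{\cP_{ij}^\circ}(\Gm_\ds\{p\})-r_{ij}n|\le\eps n/2$ wvhp. To pass from $N_{\cP_{ij}^\circ}$ to $n_{ij}$ itself, one must correct for vertices incident to a loop, where the degree in $G'$ given the degree in $G$ no longer follows the clean $\mathrm{Bin}(j,p)$ picture. These are handled by a second invocation of Theorem~\ref{th_local1}, applied to the uncoloured $1$-local property ``root is incident to a loop'', which holds in $\cT_D$ with probability $0$; hence wvhp fewer than $\eps n/2$ vertices carry a loop, and $|n_{ij}-N_{\cP_{ij}^\circ}|$ is bounded by this count.

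For the bound on $n_i'$, write $n_i'=\sum_{j\ge i}n_{ij}$ and pick a cutoff $C\ge i$ with $\sum_{j\ge C}r_j<\eps/4$. The finite piece $\sum_{j=i}^{C-1}n_{ij}$ is concentrated around $\sum_{j=i}^{C-1}r_{ij}n$ to within $\eps n/4$ wvhp, by applying the first part to each $j\in\{i,\ldots,C-1\}$ with error parameter $\eps/(4C)$ and summing. The tail is handled by the crude deterministic estimate $\sum_{j\ge C}n_{ij}\le\sum_{j\ge C}n_j(\ds)\le C^{-1}\sum_{j\ge C}jn_j(\ds)$, which is at most $\eps n/4$ once $\dconf(\ds,D)$ is small enough, by \eqref{dconftail}. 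A matching tail bound $r_i'-\sum_{j=i}^{C-1}r_{ij}=\sum_{j\ge C}r_{ij}\le\sum_{j\ge C}r_j<\eps/4$ on the distribution side, combined with the triangle inequality, finishes the argument.

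The only nonroutine step I expect is the loop correction in the first part: loops are the one systematic way in which a multigraph vertex of degree $j$ can fail to behave like a degree-$j$ vertex of the branching process under $p$-thinning. Once this is absorbed by applying Theorem~\ref{th_local1} to the zero-probability property ``root is incident to a loop'', everything else is bookkeeping around Theorem~\ref{th_localc} and \eqref{dconftail}.
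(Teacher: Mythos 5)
Your proof is correct and it uses the same core tool as the paper (Theorem~\ref{th_localc} applied to $1$-local coloured properties), but you have added two layers of scaffolding that are not needed.

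For the first bound, the paper simply takes $\cP$ to be the property ``the root has degree $j$ in $G$ and degree $i$ in the red subgraph,'' so that $N_\cP(\Gm_\ds\{p\})=n_{ij}$ \emph{exactly}, and observes that $\Pr(\cT_D\{p\}\in\cP)=r_{ij}$ since $\cT_D$ is a tree. Theorem~\ref{th_localc} then finishes in one line. Your worry about loops is natural but is already absorbed by Theorem~\ref{th_localc}: that theorem does not assert that, conditional on degree $j$ in $G$, the red degree is $\mathrm{Bin}(j,p)$ (which indeed fails at loops); it asserts only that the \emph{count} $N_\cP$ concentrates around $n\Pr(\cT_D\{p\}\in\cP)$, and the discrepancy coming from loops and other non-tree-like neighbourhoods is exactly what the coupling in Lemma~\ref{l_couple} (hence Corollary~\ref{c_prop1}) controls as part of proving Theorem~\ref{th_localc}. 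So introducing $\cP_{ij}^\circ$ and a separate loop-count estimate is correct but redundant. Likewise for the second bound: ``the root has red degree $i$'' is itself $1$-local, so Theorem~\ref{th_localc} gives $|n_i'-r_i'n|\le\eps n$ wvhp directly, with no need to decompose $n_i'=\sum_{j\ge i}n_{ij}$ and truncate. Your truncation argument works, but the choice of a finite cutoff $C$ and the appeal to \eqref{dconftail} are solving a problem that the theorem has already solved, since a $1$-local property does not require any uniformity over the (unbounded) degree $j$.

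In short: same engine, but the paper drives it more directly; the loop correction reflects a slight misreading of what Theorem~\ref{th_localc} is already giving you.
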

\begin{proof}
Apply Theorem~\ref{th_localc} to the $1$-local coloured rooted graph properties `the root
is incident with $j$ edges in total of which $i$ are red' for the first
statement, and `the root is incident with $i'$ red edges' for the second.
\end{proof}

Recall that $D_p$, the $p$-thinned version of the probability distribution $D$,
may be defined by \eqref{Dpdef}.
\begin{corollary}\label{cpij}
Given $D\in \cD$, $0<p<1$ and $\eps>0$ there exists $\delta>0$ such that,
if $\dconf(\ds,D)<\delta$, then
\[
 \Pr\Bb{\dconf(\ds',D_p)\ge \eps} \le e^{-\delta n},
\]
where $\ds'$ is the degree sequence of the random subgraph $G[p]$ of $G=\Gm_\ds$ and $n=|\ds|=|\ds'|$
is the number of vertices.
\end{corollary}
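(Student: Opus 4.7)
The plan is to decompose $\dconf^0(\ds',D_p)=\sum_{i\ge 1}|in_i'/n - ir_i'|$ at some large threshold $C=C(D,p,\eps)$ and bound the head ($i<C$) and tail ($i\ge C$) separately. The tail of $\ds'$ can be controlled \emph{deterministically} using the tail of $\ds$, because each vertex has degree in $G'$ at most its degree in $G$: if a vertex contributes to $\sum_{i\ge C} i n_i(\ds')$, it has $G$-degree at least $C$, and its $G'$-degree is at most its $G$-degree. This sidesteps the problem of controlling infinitely many $n_i'$ simultaneously, which would not follow from Lemma~\ref{pij} alone.

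More precisely: first choose $C$ large enough that $\sum_{i\ge C} ir_i' = \E(D_p\mathbf{1}_{D_p\ge C}) \le \eps/4$, using $\E(D_p)=p\E(D)<\infty$. By \eqref{dconftail}, there exists $\delta_1>0$ such that $\dconf(\ds,D)<\delta_1$ implies $\sum_{i\ge C} i n_i(\ds)\le \eps n/4$; the deterministic comparison above then yields $\sum_{i\ge C} i n_i(\ds') \le \eps n/4$ with probability $1$. For the head, apply Lemma~\ref{pij} to each $i$ with $1\le i<C$ using $\eps/(4C^2)$ in place of $\eps$, obtaining $\delta_i>0$ so that $\dconf(\ds,D)<\delta_i$ gives $\Pr(|n_i'-r_i'n|\ge \eps n/(4C^2))\le e^{-\delta_i n}$. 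Setting $\delta_* = \min_{1\le i<C}\delta_i$ and taking a union bound, with probability at least $1-(C-1)e^{-\delta_* n}$ we have $\sum_{i<C} i|n_i'/n - r_i'| \le \sum_{i<C} i\cdot \eps/(4C^2) \le \eps/8$.

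Combining via the triangle inequality, whenever $\dconf(\ds,D)<\min(\delta_1,\delta_*)$, the bad event for the head fails to hold with probability at most $(C-1)e^{-\delta_* n}$, and off that event
\[
\dconf^0(\ds',D_p) \le \sum_{i<C} i\Bigl|\frac{n_i'}{n}-r_i'\Bigr| + \sum_{i\ge C}\frac{in_i'}{n} + \sum_{i\ge C} ir_i' \le \frac{\eps}{8}+\frac{\eps}{4}+\frac{\eps}{4} < \eps.
\]
Since $\dconf(\ds,D)<\delta$ forces $n>1/\delta$, the $1/|\ds'|$ term in $\dconf$ is at most $\delta<\eps$ automatically. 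Finally, absorbing the factor $C-1$ into the exponent by choosing $\delta=\min(\delta_1,\delta_*,\eps)/2$ (valid for $n\ge 2\log C/\delta_*$, and trivially for smaller $n$ after shrinking $\delta$ further) gives the desired bound $e^{-\delta n}$. The only mildly delicate point is the tail comparison in the previous paragraph; otherwise the argument is a routine union bound built on top of Lemma~\ref{pij} and the uniform-integrability-type consequence \eqref{dconftail} of the hypothesis.
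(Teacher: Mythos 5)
Your proof is correct and follows essentially the same route as the paper's: split $\dconf^0(\ds',D_p)$ at a threshold $C$, bound the tail $\sum_{i\ge C} i n_i(\ds')$ deterministically by the corresponding tail of $\ds$ (using that each vertex's $G'$-degree is at most its $G$-degree, together with the uniform-integrability consequence \eqref{dconftail}), and bound the finitely many head terms $|n_i'-r_i'n|$ for $i<C$ via Lemma~\ref{pij} and a union bound. The paper phrases the tail control in terms of the tail of $D$ plus stochastic domination, whereas you bound $\sum_{i\ge C}ir_i'$ directly; this is a cosmetic difference, and both arguments hinge on the same deterministic degree comparison.
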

\begin{proof}
Since $\E(D)<\infty$ there is a constant $C$ such that $\sum_{i\ge C}i r_i<\eps/8$.
If $\delta$ is small enough, then $\dconf(\ds,D)<\delta$ implies $\sum_{i\ge C} \ini(\ds)< \eps n/4$.
Since $D$ stochastically dominates $D_p$, and
the degree of a vertex in our random subgraph $G'$ is at most its degree
in $G$, the corresponding bounds for $D_p$ and $n_i'=n_i(\ds_n')$ follow.
From the definition \eqref{d10def}, \eqref{d11def} of $\dconf$ it thus suffices 
to prove that for each fixed $i<C$ we have
$|n_i'-r_i' n|\le \eps/(2C^2)$ with sufficiently high probability;
this follows from Lemma~\ref{pij}.
\end{proof}

The next trivial lemma will be applied to the sprinkled edges.
\begin{lemma}\label{join}
Let $A$ and $B$ be disjoint sets of stubs in the configuration model associated to $\Gm_\ds$.
Then the probability that no stubs in $A$ are paired to stubs in $B$ is at most
$\exp(-|A||B|/(8m))$, where $m=m(\ds)$.
\end{lemma}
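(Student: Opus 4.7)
The plan is to reveal the partners of the stubs in $A$ one at a time by deferred decisions, and to bound at each step the probability of avoiding an $A$-$B$ pair.

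I would fix an arbitrary ordering $a_1,\ldots,a_{|A|}$ of $A$ and generate the uniform pairing as follows: process the $a_i$ in order; for each $a_i$ that is not yet paired, choose its partner uniformly from the set $U_i$ of currently unpaired stubs other than $a_i$, and if $a_i$ has already been paired (necessarily to some earlier $a_j$) just skip it. (After the $a_i$ are exhausted, pair up the remaining stubs uniformly.) Let $s$ denote the random number of steps at which a fresh partner is actually chosen.

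The key observations are: (i) before any such step at most $|A|-1$ pairs have been formed, so $|U_i|\le 2m-1$; (ii) conditional on the event that no $A$-$B$ pair has been formed so far, no stub of $B$ has yet been used, so $U_i\supseteq B$ and hence $|U_i\cap B|=|B|$; and (iii) each fresh pairing step uses at most two stubs of $A$ (the current $a_i$ and at most one other $a_j$ assigned as its partner), so $s\ge |A|/2$ holds deterministically. Combining (i) and (ii), at each fresh step the conditional probability of avoiding $B$ is at most $1-|B|/(2m-1)\le 1-|B|/(2m)$. Multiplying across the at least $|A|/2$ such steps gives
\[
 \Pr(\text{no $A$-$B$ pair})\le \Bb{1-\frac{|B|}{2m}}^{|A|/2}\le \exp\!\Bb{-\frac{|A||B|}{4m}}\le \exp\!\Bb{-\frac{|A||B|}{8m}},
\]
which is the claimed bound (with a factor of $2$ to spare).

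The only bit requiring any care is the bookkeeping for pairings that fall inside $A$ itself, and this is handled cleanly by the deterministic lower bound $s\ge |A|/2$ in (iii); the rest is a routine deferred-decisions computation.
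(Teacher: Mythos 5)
The proposal is correct and follows essentially the same reveal-one-stub-at-a-time strategy as the paper. The one substantive difference is in how the two arguments guarantee that enough $B$-stubs remain available at each reveal: you condition on the event that no $A$--$B$ pair has yet been formed, so that all of $B$ is still unpaired and $|U_i\cap B|=|B|$; the paper instead uses the worst-case bound that at most $\lceil|A|/2\rceil-1$ of the $B$-stubs can have been consumed, which is why it needs the WLOG reduction $|A|\le|B|$ and ends up with the weaker constant $8m$. Your version avoids the WLOG and gives the sharper constant $4m$. One small point that ``multiplying across the at least $|A|/2$ such steps'' glosses over: because the positions of the fresh steps among $a_1,\ldots,a_{|A|}$ are random, the iterated-conditioning bound is cleanest if you note that there are always at least $\lceil|A|/2\rceil$ fresh steps and apply the conditional bound to the first $\lceil|A|/2\rceil$ of them (equivalently, do what the paper does and adaptively pick an as-yet-unpaired $A$-stub at each step, rather than scanning in a fixed order and skipping). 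This is a matter of phrasing rather than a gap, and you already flagged the bookkeeping as the only delicate point.
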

\begin{proof}
Assume without loss of generality that $|A|\le |B|$. Perform a sequence of
$\ceil{|A|/2}$ experiments, each consisting of choosing an as-yet-unpaired stub
in $A$ and revealing its partner. 
In the $i$th experiment, there are
at least $|B|-(\ceil{|A|/2}-1)\ge |B|-|A|/2\ge |B|/2$
unpaired stubs in $B$, so the probability of finding the partner in $B$ is at least
$(|B|/2)/(2m+1-2i) \ge |B|/(4m)$. Hence the probability that no partner in $B$
is found is at most $(1-|B|/(4m))^{|A|/2}\le \exp(-|A||B|/(8m))$.
\end{proof}

We are finally ready to prove the multigraph case of Theorem~\ref{th2},
where $G_\ds$ is replaced by $\Gm_\ds$.

\begin{proof}[Proof of Theorem~\ref{th2} for $\Gm_\ds$]
Let $L_i=L_i(\Gm_\ds)$ be the number of vertices in the $i$th largest component of $\Gm_\ds$.

Fix $D\in\cD$ and $\eps>0$. By Lemma~\ref{lNgekD} there are constants $k$ and $\delta>0$
such that if $\dconf(\ds,D)<\delta$, then
\[
 \Pr\bb{N_{\ge k}(G)\ge (\rho(D)+\eps/8)n}\le e^{-\delta n}.
\]
Since $L_1+L_2\le N_{\ge k}+2k$, if $n$ is large enough (which we can ensure by 
taking $\delta$ small enough) it follows that
\begin{equation}\label{L1L2ub}
 \Pr\Bb{L_1+L_2 \le (\rho(D)+\eps/4) n} \ge 1-e^{-\delta n}.
\end{equation}
To complete the proof, it suffices to show that
if $\dconf(\ds,D)<\delta$ then
\begin{equation}\label{MRaim}
 \Pr\Bb{L_1 \ge (\rho(D)-3\eps/4) n} \ge 1-e^{-\delta n}.
\end{equation}
Of course, this may require reducing $\delta$. Indeed,
\eqref{L1L2ub} and \eqref{MRaim} together
give high probability upper and lower bounds on $L_1$, and a high probability
upper bound on $L_2$. Since we have already proved \eqref{th2k} in Corollary~\ref{c_mrk_conc},
Theorem~\ref{th2} then follows.

As $p\to 1$, the probability distribution $D_p$ defined above converges
to $D$, both in distribution and (since $\E(D_p)\le \E(D)<\infty$) in expectation.
Hence, Theorem~\ref{rhoD} tells us that $\rho(D_p)\to \rho(D)$ as $p\to 1$.
(This is the only place in the argument where $\Pr(D\ge 3)>0$ is used.)
In particular, there is some $p<1$ such that
\[
 \rho(D_p) \ge \rho(D)- \eps/8.
\]
Let us fix such a $p$ for the rest of the proof. Also, fix an integer $t\ge 1$ such that
\[
 p^t \le \eps/20,
\]
set
\[
 K = 1+\Delta+\cdots +\Delta^{t-1}+1,
\]
and let
\begin{equation}\label{ag}
 \alpha = \frac{\eps}{40\Delta^t} \hbox{\qquad and\qquad}\gamma=\frac{\alpha^2}{8\E(D)}.
\end{equation}

We shall study the coloured random graph $\Gm_\ds\{p\}$ defined earlier,
obtained from $\Gm_\ds$ by colouring each edge red with probability $p$
and blue otherwise, independently of the others. As before, we write $G'=\Gm_\ds[p]$
for the red subgraph and $G''$ for the blue subgraph,
and $\ds'$ and $\ds''$ for the degree sequences of $G'$ and $G''$.
Recall that, by Lemma~\ref{redblue}, given $\ds'$, we can view $G'$ and $G''$
as independent configuration multigraphs.

Applying Lemma~\ref{lNgekD} to $G'$, we find that
there exist $k\ge \max\{K,2/\gamma\}$ and $\delta_1>0$ such that, writing
$\vs$ for the set of vertices in components of $G'$ with at least $k$ vertices,
we have
\[
 \Pr\Bb{ |\vs| \ge (\rho(D)-\eps/4)n \bigm| \ds' }
  \ge  \Pr\Bb{ |\vs| \ge (\rho(D_p)-\eps/8)n \bigm| \ds' } \ge 1- e^{-\delta_1 n}
\]
whenever $\dconf(\ds',D_p)<\delta_1$.
By Corollary~\ref{cpij} there is a $\delta_2>0$ such that if $\dconf(\ds,D)<\delta_2$, then
\[
 \Pr\Bb{\dconf(\ds',D_p)\ge \delta_1} \le e^{-\delta_2 n}.
\]
Hence, reducing $\delta$ if necessary, it follows that
if $\dconf(\ds,D)<\delta$ then
\begin{equation}\label{Lbig}
 \Pr\Bb{ |\vs| \ge (\rho(D)-\eps/4)n } \ge 1- e^{-\delta_1 n}-e^{-\delta_2 n} \ge 1-e^{-\delta n}.
\end{equation}
Note that in the argument above we could have sidestepped Corollary~\ref{cpij}, using a coloured version of
Theorem~\ref{lNgekD} and considering the coloured property `the red component of the
root has size at least $k$'. However, the approach above seems more intuitive
and we shall use Corollary~\ref{cpij} in Section~\ref{sec_extras}.

Let us call a vertex $v\in V(G)=V(G')$ \emph{usable} if it is incident with a blue
edge, i.e., an edge of $G''$. (These edges will be our `sprinkled' edges.)
Note that knowing $\ds'$ determines whether $v$ is usable: we don't know which edges
are present in $G''$, but we do know its degree sequence.
Our next aim is to find `enough' usable vertices in $\vs$, for which we need
some further definitions.

By the \emph{radius} $r(G)$ of a (locally finite) rooted graph $G$ we mean the
maximum distance of any vertex from the root, considering only vertices in the
component $C$ containing the root. Thus $r(G)$ is infinite if and only if $C$ is
infinite.

Given a coloured rooted graph $G$, we write $R(G)$ and $B(G)$ for the
red and blue subgraphs of $G$, respectively.
Let $\cG_t$ be the property of coloured rooted graphs $G$ that either

(i) $r(R(G))<t$ or

(ii) some vertex of $G$ within distance $t$ of the root is
incident with an edge of $B(G)$.

Note that, considering the shortest path to a
blue edge, (ii) is equivalent to (ii') some vertex of $R(G)$ within distance $t$
(in $R(G)$) of the root is incident with an edge of $B(G)$.
The property $\cG_t$ is clearly $(t+1)$-local.

Consider the case where $G=\cT_D\{p\}$ is a coloured rooted tree. Conditioning first
on the graph structure, if $r(G)<t$ then (i) will certainly hold. Otherwise,
there are at least $t$ edges of $G$ within distance $t$ of the root, and if any
one is blue (ii) holds. Thus
\[
 \Pr(\cT_D\{p\} \in \cG_t) \ge 1-p^t \ge 1-\eps/20.
\]

By Lemma~\ref{nobig} (with $\eps/2$ in place of $\eps$), there is some $\Delta$ such that
\[
 \Pr(\cT_{D_p}\in\cM_{\Delta,t})\ge 1-\eps/20.
\]
Let $\cH$ be the property
\[
 \cH = \{  R(G)\in \cM_{\Delta,t} \hbox{ and }G\in\cG_t \},
\] 
noting that
\begin{equation}\label{Ht}
 \Pr(\cT_D\{p\}\in \cH)\ge 1-\eps/10.
\end{equation}
We call a vertex $v$ of our coloured configuration model $G=\Gm_\ds\{p\}$
\emph{helpful} if $(G,v)\in \cH$, i.e., if $G$ rooted at
$v$ has property $\cH$. Let $H$ denote the set of helpful vertices.
From \eqref{Ht} and Theorem~\ref{th_localc}, if $\delta$ is chosen
small enough, then if $\dconf(\ds,D)<\delta$ we have
\begin{equation}\label{Usmall}
 \Pr\bb{ |H|\le n- \eps n/5 } \le e^{-\delta n}.
\end{equation}
Since, as noted above, knowing $\ds'$ determines which vertices are usable (incident
with edges of $G''$), it is easy to check from the definition of $\cH$ that
knowing $\ds$ (which is given), $\ds'$ and $G'$ determines which vertices of $G$ are helpful. 

From now on we condition on $\ds'$ and $G'$, assuming that
\begin{equation}\label{assump}
  |\vs|\ge (\rho(D)-\eps/4) n \hbox{\qquad and\qquad} |H|\ge n-\eps n/5.
\end{equation}
This makes sense since $\vs$ (the set of vertices 
in components of $G'$ with order at least $k$) and $H$ are determined by $\ds'$ and $G'$,
and \eqref{Lbig} and \eqref{Usmall} imply that the event \eqref{assump}
has probability at least $1-e^{-\delta n}$.

Suppose that $v\in \vs\cap H$. Then, since $v$ is helpful, every vertex in the
$t$-neighbourhood $\Ga_{\le t}^{G'}(v)$ of $v$ in $G'$ has degree at most
$\Delta$. Furthermore, from the definition of $\cG_t$ (recalling (ii') above),
either (a) the radius of $G'$ rooted at $v$ is at most $t-1$,
or (b) $\Ga_{\le t}^{G'}(v)$ meets an edge of $G''$, i.e., contains a usable vertex.
In case (a), it follows that the component of $v$ in $G'$ has at most
$1+\Delta+\cdots+\Delta^{t-1}<K$ vertices, contradicting $v\in \vs$.  Thus case
(b) holds and there is a path $P_v=v_0v_1\cdots v_r$ in $G'$ of length at most
$t$ where $v_0=v$, each $v_i$ has degree at most $\Delta$ in $G'$, and $v_r$ is
usable.

At this point we are finally ready to apply the sprinkling strategy
of Erd\H os and R\'enyi~\cite{ER_giant}.
Let us call a partition $(X,Y)$ of $\vs$ a \emph{potentially bad cut} if $|X|$,
$|Y|\ge \eps n/4$ and there are no edges of $G'$ joining $X$ to $Y$.
We call $(X,Y)$ a \emph{bad cut} if, in addition, no edge of $G''$ joins
$X$ to $Y$.  Since each
component of $G'$ in $\vs$ must lie either entirely in $X$ or entirely in $Y$, there are
at most
\begin{equation}\label{nopb}
 2^{|\vs|/k}\le 2^{n/k} \le e^{n/k} \le e^{\gamma n/2}
\end{equation}
potentially bad cuts, recalling that we chose $k\ge 2/\gamma$.

Let $(X,Y)$ be a potentially bad cut, and recall that $|H|\ge n-\eps n/5$. Thus
$X$ contains at least $\eps n/20$ helpful vertices $v$.  From each there is a
path $P_v$ as described above ending at some usable vertex $u$.  Because of the
degree conditions, at most $1+\Delta+\cdots+\Delta^t\le 2\Delta^t$ such paths
can end at a given usable vertex.  Since $P_v$ is a path in $G'$, and $X$ is a
union of components of $G'$, we conclude that $X$ contains at least $\alpha n$
usable vertices, where $\alpha=\eps/(40\Delta^t)$ as in \eqref{ag}.
Of course, the same applies to $Y$.

Recall that we have conditioned on $\ds'$ and $G'$, but not on $G''$.
In the configuration model corresponding to $G''$, each usable vertex
has at least one stub, so $X$ and $Y$ each correspond to sets of at least $\alpha n$
stubs. Since (if $\delta$ is chosen small enough) $G''$ has at most $\E(D)n$ edges,
by Lemma~\ref{join}
\[
 \Pr\bb{G''\hbox{ contains no }(X,Y)\hbox{ edge} \mid \ds',G'} \le e^{-\frac{\alpha^2n^2}{8\E(D)n}} = e^{-\gamma n}.
\]
From \eqref{nopb} it follows that the expected number of bad cuts (given $\ds'$ and $G'$) is at
most $e^{-\gamma n/2}$, so the probability that there are
any bad cuts is at most $e^{-\gamma n/2}$.  When there are no bad cuts, it is easy to
check that $L_1(G)\ge |\vs|-2\eps n/4 \ge (\rho(D)-3\eps/4)n$, completing the proof
of \eqref{MRaim} and hence of the multigraph case of Theorem~\ref{th2}.
\end{proof}

\section{Simple graphs}

As noted in the introduction,
Janson and Luczak~\cite{JL_new} proved a result that is similar
to the multigraph case of
Theorem~\ref{th2}: the assumptions are identical, but the error
bounds in the conclusions in~\cite{JL_new} are much weaker. An advantage of our stronger
error bounds is that they allow us to translate the result
to random \emph{simple} graphs without further restrictions on the degree sequences.
For this we need a simple lemma.

\begin{lemma}\label{psimple}
Let $D\in \cD$. Then for any $\eps>0$ there exists a $\delta>0$
such that if $\dconf(\ds,D)<\delta$ then
\[
 \Pr\bb{ \Gm_{\ds}\text{ is simple }} \ge e^{-\eps n},
\]
where $n=|\ds|$. 
Equivalently, if $D\in \cD$ and $\ds_n\to D$ in the sense that
\eqref{dcn1} and \eqref{dcn2} hold and $|\ds_n|\to\infty$, then
\[
 \Pr\bb{ \Gm_{\ds_n}\text{ is simple }} = e^{-o(|\ds_n|)}.
\]
\end{lemma}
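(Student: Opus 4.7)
My plan is to prove $\Pr(\Gm_\ds\text{ is simple}) \ge e^{-\eps n}$ by truncating the degree sequence at a large constant $C$ and applying a first-moment analysis to bound the probability of loops and multi-edges. Fix $\eps > 0$, choose $\eta = \eta(\eps) > 0$ small and $C = C(\eta,D)$ large enough that $\sum_{i\ge C}\iri < \eta$, and set $V_H = \{i : d_i > C\}$ and $V_L = V \setminus V_H$. The argument underlying \eqref{dctail} gives $\sum_{V_H} d_i \le 2\eta n$ whenever $\dconf(\ds,D) < \delta$ is sufficiently small; moreover, any vertex of degree $\Delta := \Delta(\ds) \ge C$ contributes $\Delta/n - \Delta r_\Delta$ to $\dconf^0(\ds,D)$, which combined with $\Delta r_\Delta \le \eta$ gives $\Delta \le (\delta+\eta)n$.

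I would then decompose the event that $\Gm_\ds$ is simple as the conjunction of the events that there are no loops and no multi-edges, and bound each probability. The expected number of loops is at most $\sum_i d_i^2/(4m)$, which splits into an $O(C)$ contribution from $V_L$ and a contribution from $V_H$ bounded by $\Delta(\ds)\sum_{V_H} d_i/(4m) = O((\delta+\eta)\eta n)$; this can be made $\le \eps n/4$ for small $\delta$ and $\eta$. A Poisson-style lower bound (since loops at different vertices are negatively correlated in the configuration model) then gives $\Pr(\text{no loops}) \ge e^{-O(\eps n)}$. Multi-edges with at least one endpoint in $V_L$ are handled analogously, as their expected count is $O(C^2 + C(\delta+\eta)\eta n) \le \eps n/4$.

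The main obstacle is the multi-edges between two vertices of $V_H$, whose first-moment bound can be as large as $\Theta(n^2)$ and is therefore useless. The resolution is that for fixed $u,v \in V_H$ the number $X_{uv}$ of parallel $uv$-edges concentrates around its mean $\mu_{uv}=d_ud_v/(2m)$, and a direct stub-by-stub calculation in the spirit of Lemma~\ref{join} shows $\Pr(X_{uv}\le 1) \ge e^{-c\mu_{uv}}$ for an absolute constant $c$. Since $\sum_{u<v\in V_H} \mu_{uv} \le (\sum_{V_H} d_i)^2/(4m) \le \eta^2 n/\E(D)$, combining the pair-wise bounds (via either an FKG-type positive correlation argument or by conditioning on the sub-pairing of all $V_H$-stubs first, as in Section~4, and then applying a bounded-degree simpleness estimate to the residual $V_L$-configuration) yields $\Pr(\text{no }V_H\text{-}V_H\text{ multi-edge}) \ge e^{-O(\eta^2 n)}$, well below $e^{-\eps n/4}$ for $\eta$ small. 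Multiplying the three estimates completes the proof, and the qualitative equivalent follows from \eqref{coneq}.
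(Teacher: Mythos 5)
Your preliminary estimates are fine: the bounds $\sum_{V_H} d_i \le 2\eta n$ and $\Delta(\ds)\le(\delta+\eta)n$ follow correctly from $\dconf^0(\ds,D)<\delta$ and the choice of $C$, and the split of the expected loop/multi-edge counts into $V_L$ and $V_H$ contributions is sensible. However, the two places where you pass from first-moment estimates to lower bounds on probabilities both have genuine gaps.

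First, the parenthetical justification ``since loops at different vertices are negatively correlated in the configuration model'' is pointing the wrong way. If the loop indicators were negatively correlated, that would give $\Pr(\text{no loops})\le\prod_v\Pr(\text{no loop at }v)$ — an \emph{upper} bound. To get a Poisson-style \emph{lower} bound of the form $\Pr(\text{no loops})\ge e^{-O(\E[\text{loops}])}$ you would need positive association (or an explicit sequential/coupling argument), and neither is a routine consequence of the uniform-pairing structure; the configuration model is not a product measure, so FKG is not available off the shelf. The same issue recurs in your treatment of $V_H$--$V_H$ multi-edges: you assert $\Pr(X_{uv}\le 1)\ge e^{-c\mu_{uv}}$ for each pair and then wave at ``an FKG-type positive correlation argument'' to multiply the bounds, but this is precisely the correlation inequality that would need to be proved, and it is not carried out. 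The alternative you sketch — ``conditioning on the sub-pairing of all $V_H$-stubs first'' — is closer to a workable idea, but as stated it does not specify which event on that sub-pairing you condition on, which is the crux.

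The paper's proof makes exactly that conditioning idea concrete and thereby avoids all correlation questions. It fixes a scale $K$ with $\alpha n$ ``low-degree'' vertices of degree in $[1,K]$, a cutoff $C>K$ so that at most $\eta n$ stubs sit on ``high-degree'' vertices, and defines the single event $\cE$ that \emph{every} high-degree stub is paired to a stub of a \emph{distinct} low-degree vertex. Revealing the partners of the (at most $\eta n$) high-degree stubs one at a time, each step succeeds with probability at least $\alpha n/2 \div 2Mn$, giving $\Pr(\cE)\ge(\alpha/4M)^{\eta n}\ge e^{-\eps n/2}$ by the choice of $\eta$. On $\cE$ there are no loops or multi-edges touching $V_H$ at all, and the residual pairing on the low-degree stubs is again uniform with all degrees $\le K$, so the classical bounded-degree result of Bollob\'as gives $\Pr(\text{simple}\mid\cE)=\Theta(1)$. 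This is both simpler and rigorous, whereas your plan would still need a correct (and non-trivial) correlation lemma to close the loop and multi-edge steps.
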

In particular, the degree sequences we consider here are (for large $n$)
realizable by simple graphs.
\begin{proof}
The equivalence of the two statements follows easily from \eqref{coneq}; we
prove the first form.

Observe that there are constants $K$, $M$ and $\alpha>0$ such that, if $\delta$
is chosen small enough, then $\dconf(\ds,D)<\delta$ ensures that at least
$\alpha n$ vertices of $\ds$ have degree between 1 and $K$ (inclusive), and
$m=m(\ds)\le Mn$, where $n=|\ds|$ as usual. Indeed, choose any $K\ge 1$ such that $\Pr(D=K)>0$,
let $\delta\le \alpha=\Pr(D=K)/2$, and take $M=\E(D)/2+\alpha$, say.
These properties and \eqref{dctail}/\eqref{dconftail} are all that we need to know
about $\ds$.

Let $\cS$ denote the event that $\Gm_\ds$ is simple, and fix $\eps>0$.
Pick $\eta>0$ such that $\eta\log(4M/\alpha)\le \eps/2$ and $\eta\le \alpha/2$. By \eqref{dconftail}
there is a constant $C$, which we may take to be larger than $K$,
such that if $\delta$ is small enough, then at most $\eta n$ stubs are attached to
vertices of degree at least $C$. Let us call a vertex {\em low degree} if its degree
is between $1$ and $K$, and {\em high degree} if its degree is at least $C$.
Let $\cE$ be the event that the stubs attached to high degree vertices are paired
with stubs attached to {\em distinct} low degree vertices.

To determine whether $\cE$ holds, we test the at most $\eta n$ stubs attached to high
degree vertices one-by-one. At each stage, there are at least $\alpha n-\eta n\ge \alpha n/2$
low-degree vertices none of whose stubs has yet been paired. Since each such vertex has degree
at least one, and there are at most $2Mn$ unpaired stubs in total, it follows that
\[
 \Pr(\cE) \ge \left(\frac{\alpha n}{4Mn}\right)^{\eta n} \ge e^{-\eps n/2}.
\]
When $\cE$ holds, the graph $\Gm_\ds$ is simple if and only if the graph $G_0$
formed by the edges not incident with high-degree vertices is simple. But,
after revealing the partners of the stubs attached to the high-degree vertices,
the conditional distribution of $G_0$ is given by the configuration model
for some degree sequence in which all degrees are at most $C$, and at
least $\alpha n/2=\Theta(n)$ degrees are positive. The original
result of Bollob\'as~\cite{BB_config} (see also Bender and Canfield~\cite{BC})
thus gives $\Pr(\cS\mid \cE)=\Theta(1)$,
and the result follows.
\end{proof}

\begin{proof}[Proof of Theorem~\ref{th2} for $G_\ds$]
Let $\cP$ be any property of graphs.
Since the distribution of $\Gm_\ds$ conditioned
on the event $\cS$ that $\Gm_\ds$ is simple is exactly that of $G_\ds$, we have
\[
 \Pr( G_\ds \in \cP ) = \Pr( \Gm_\ds \in \cP \mid \Gm_\ds \in \cS) \le
 \frac{ \Pr(\Gm_\ds \in \cP)}{ \Pr(\Gm_\ds \in \cS)}.
\]
Fix $D\in \cD$. All statements about $\Gm_\ds$ in Theorem~\ref{th2} are of the form that
for some property $\cP$, there exist $\gamma,\delta_1>0$ such that if
$\dconf(\ds,D)<\delta_1$, then $\Pr(\Gm_\ds\in \cP)\le e^{-\gamma n}$.
(The theorem asserts this with $\delta_1=\gamma$.)
Lemma~\ref{psimple} gives us $\delta_2>0$ such that $\dconf(\ds,D)<\delta_2$
implies $\Pr(\Gm_\ds\in \cS)\ge e^{-\gamma n/2}$. Hence,
setting $\delta=\min\{\delta_1,\delta_2,\gamma/2\}$, if
$\dconf(\ds,D)<\delta$ then
\[
 \Pr( G_\ds \in \cP) \le e^{-\gamma n}/e^{-\gamma n/2} = e^{-\gamma n/2}\le e^{-\delta n},
\]
completing the proof of Theorem~\ref{th2}.
\end{proof}

As noted in the introduction, Theorem~\ref{th2} implies Theorem~\ref{th1}.

\section{Extensions}\label{sec_extras}

One of the motivations for studying the size of the largest component in the
configuration model $G_\ds$ is to consider percolation in this random
environment: given $0<p<1$, when does the random subgraph $G_\ds[p]$ of $G_\ds$
obtained by selecting edges independently with probability $p$ contain a giant
component? For example, Goerdt~\cite{Goerdt} showed that when $G_\ds$ is simply
a random $d$-regular graph, then there is a `threshold'
at $p=1/(d-1)$, above which a giant component appears.
As is by now well known, for results of the
present type this question turns out to be no more general than studying
$G_\ds$ directly (i.e., the case $p=1$), since one can view a random subgraph of
the configuration model as another instance of the configuration model. This is
discussed in detail by Fountoulakis~\cite{F_percd}; for a slightly different
approach see Janson~\cite{Janson_cperc}. We give the short argument since it is very 
easy with the ingredients we have to hand. In the next result we state
only the most interesting part formally; $D_p$ is the `$p$-thinned' version
of the probability distribution $D$, defined in \eqref{Dpdef} and 
appearing in Corollary~\ref{cpij}.

\begin{theorem}\label{th_perc}
Let $0<p<1$ be fixed. The conclusions of Theorems~\ref{th1} and~\ref{th2} hold
if $\Gm_\ds$ or $G_\ds$ is replaced by its random subgraph $\Gm_\ds[p]$ or
$G_\ds[p]$, and $\rho(D)$ and $\rho_k(D)$ are replaced by $\rho(D_p)$
and $\rho_k(D_p)$.

In particular, given $D\in \cD$ with $\Pr(D\ge 3)>0$, $0<p<1$ and $\eps>0$,
there exists $\delta>0$ such that, if $\dconf(\ds,D)<\delta$, then
\[
 \Pr\Bb{ \bigabs{L_1(G_\ds[p])-\rho(D_p) n} \ge \eps n} \le e^{-\delta n}
\]
and
\[
 \Pr\Bb{ \bigabs{L_1(\Gm_\ds[p])-\rho(D_p) n} \ge \eps n} \le e^{-\delta n},
\]
where $n=|\ds|$.
\end{theorem}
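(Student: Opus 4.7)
The plan for Theorem~\ref{th_perc} is to observe that percolation on the configuration model produces another instance of the configuration model, and then invoke Theorem~\ref{th2} for this new instance. Specifically, Lemma~\ref{redblue} says that, conditional on $\ds'$, the degree sequence of $G'=\Gm_\ds[p]$, the graph $G'$ is distributed as $\Gm_{\ds'}$. Corollary~\ref{cpij} guarantees that $\dconf(\ds',D_p)$ is arbitrarily small with probability at least $1-e^{-\delta n}$, provided $\dconf(\ds,D)$ is small enough. Thus the whole problem reduces to applying Theorem~\ref{th2} to $\Gm_{\ds'}$ with target distribution $D_p$.

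Before doing so I need to verify that $D_p$ satisfies the hypotheses of Theorem~\ref{th2}. We have $\E(D_p)=p\E(D)\in(0,\infty)$, so $D_p\in\cD$. Moreover, if $\Pr(D\ge 3)>0$ then, fixing $j\ge 3$ with $r_j>0$, one gets
\[
 \Pr(D_p\ge 3)\ge r_j\binom{j}{3}p^3(1-p)^{j-3}>0
\]
since $0<p<1$; so the hypothesis $\Pr(D_p\ge 3)>0$ is inherited. Now, fixing $\eps>0$, Theorem~\ref{th2} applied to $D_p$ and $\eps$ yields a $\delta_1>0$ such that $\dconf(\widetilde\ds,D_p)<\delta_1$ implies, for example, $\Pr(|L_1(\Gm_{\widetilde\ds})-\rho(D_p)n|\ge\eps n)\le e^{-\delta_1 n}$ (and analogously for $L_2$ and $N_k$). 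Applying Corollary~\ref{cpij} with $\delta_1$ in place of $\eps$, I obtain $\delta_2>0$ such that $\dconf(\ds,D)<\delta_2$ implies $\Pr(\dconf(\ds',D_p)\ge \delta_1)\le e^{-\delta_2 n}$. Splitting on whether $\dconf(\ds',D_p)<\delta_1$ holds, applying Lemma~\ref{redblue} on the good event, and taking a union bound yields the desired exponential bound for $\Gm_\ds[p]$ after setting $\delta=\tfrac12\min\{\delta_1,\delta_2\}$.

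The simple-graph case $G_\ds[p]$ is then handled by exactly the conditioning argument used in the proof of Theorem~\ref{th2} for $G_\ds$: since $G_\ds[p]$ has the distribution of $\Gm_\ds[p]$ conditioned on $\Gm_\ds$ being simple, one writes
\[
 \Pr(G_\ds[p]\in\cP)\le \Pr(\Gm_\ds[p]\in\cP)/\Pr(\Gm_\ds\in\cS),
\]
and uses Lemma~\ref{psimple} to bound the denominator below by $e^{-\gamma n/2}$ for any small $\gamma>0$. Combined with the multigraph bound $e^{-\gamma n}$ from the previous paragraph, this gives an exponential bound with exponent $\gamma/2$ for $G_\ds[p]$. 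There is no genuine obstacle in the proof; the only care required is bookkeeping with the chain of $\delta$'s through nested applications of Theorem~\ref{th2}, Corollary~\ref{cpij} and Lemma~\ref{psimple}.
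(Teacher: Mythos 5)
Your proposal matches the paper's own proof essentially verbatim: reduce to the multigraph case via Lemma~\ref{redblue} and Corollary~\ref{cpij}, apply Theorem~\ref{th2} with target distribution $D_p$, and handle $G_\ds[p]$ by conditioning on $\Gm_\ds$ (not $\Gm_\ds[p]$) being simple via Lemma~\ref{psimple}. The brief check that $\Pr(D_p\ge 3)>0$ is a worthwhile detail the paper leaves implicit, but the route is the same.
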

\begin{proof}
For $\Gm_\ds[p]$, this is essentially trivial from Theorem~\ref{th2} and
Corollary~\ref{cpij}. Indeed, by Theorem~\ref{th2} there exists $\delta_1>0$
such that if $\dconf(\ds_1,D_p)<\delta_1$ then
$\Gm_{\ds_1}$ has the desired property ($L_1$ close to $\rho(D_p)n$) with
probability at least $1-e^{-\delta_1 n}$. By Corollary~\ref{cpij} there is a $\delta$
such if $\dconf(\ds,D)<\delta$ then $\Pr(\dconf(\ds',D_p)<\delta_1)\ge 1-e^{-\delta n}$, where $\ds'$ is the degree sequence
of $\Gm_\ds[p]$. The result for $\Gm_\ds[p]$ follows by
noting that, conditional on $\ds'$, $\Gm_\ds[p]$ has the distribution of $\Gm_{\ds'}$.

For $G_\ds[p]$ we argue as in the last part of the previous section: note that conditional
on $\Gm_\ds$ being simple, $\Gm_\ds[p]$ has the same distribution as $G_\ds[p]$. Then
use Lemma~\ref{psimple} as before. The key point is that we do not try to condition
on $\Gm_\ds[p]$ being simple.
\end{proof}

\begin{remark}
Theorem~\ref{th_perc} implies that there is a `critical' $\pc$ such that $\Gm_\ds[p]$
has a giant component if and only if $p>\pc$. Indeed, $\pc=\inf\{p:\rho(D_p)=0\}$.
From basic branching process results, it is easy to see that $\pc=1/\E(Z_D)$, where
$Z_D$ is the distribution defined in \eqref{Zdef}. Either from this,
or from the fact that $\rho(D_p)>0$ if and only if $\E(D_p(D_p-2))>0$ it is
easy to see that
\[
 \pc = \frac{\E(D)}{\E(D(D-1))}.
\]
This is the same formula as given by Fountoulakis~\cite{F_percd}, for example, who
proved a form of Theorem~\ref{th_perc}, with stronger assumptions on the degree
sequences and weaker error bounds.
\end{remark}

\begin{remark}
Taking $|\ds_n|=n$ for notational simplicity, in the context of
Theorems~\ref{th1} and~\ref{th2}, the assumption that $\E(D)<\infty$,
corresponding to $m(\ds_n)=O(n)$, is very natural.  Indeed, it is not hard to
see that if $m(\ds_n)/n\to\infty$, then $\Gm_{\ds_n}$ will with high
probability contain a component with $n-o(n)$ vertices.  As soon as we
consider percolation on $\Gm_{\ds_n}$, however, it makes very good sense to allow $m(\ds_n)/n\to\infty$
and then study $\Gm_{\ds_n}[p_n]$ with $p_n\to 0$ as $n\to\infty$.
All we shall say here is that in many situations, for appropriate $p_n$,
the (random) degree sequence of $\Gm_{\ds_n}[p_n]$ will with high probability
be such that Theorem~\ref{th1} applies to it. For example, if all degrees are equal
to $k_n$ with $k_n\to\infty$ and $k_np_n\to\lambda\in \mathbb{R}$, then
the degree distribution of $\Gm_{\ds_n}[p_n]$ will be asymptotically Poisson
with mean $\lambda$. Hence Theorem~\ref{th1} can be used to show
that the threshold for percolation on $\Gm_{\ds_n}$ is at $\lambda=1$,
i.e., at $p_n=1/k_n$.
\end{remark}

Throughout the paper we have focussed on the number of vertices in the giant component.
What can we say about other properties of the giant component,
such as the number of vertices of given degree,
or the total number of edges? Results for these are given (under different
conditions) by Janson and Luczak~\cite{JL_new}, for example.  An often neglected
benefit of the branching-process viewpoint is that it typically gives
results of this type essentially automatically, not just for these properties,
but for any local property. (A version of this observation was made in a
different context by Bollob\'as, Janson and Riordan~\cite[Lemma 11.11]{kernels}; see also
\cite[Theorem 2.8]{k-core}.) 

We state the following result in a form analogous to Theorem~\ref{th2}; this of course
implies a version analogous to Theorem~\ref{th1}.

\begin{theorem}\label{th_local}
Let $\cP$ be a local property of rooted graphs, let $D\in \cD$ and let $\eps>0$.
There is some $\delta>0$ such that if $\dconf(\ds,D)<\delta$ then
the following hold, with $n=|\ds|$ and $G=\Gm_\ds$ or $G=G_\ds$:
\begin{equation}\label{totconc1}
 \Pr\Bb{ \bigabs{
 N_\cP(G)
  - n\Pr(\cT_D \hbox{ has }\cP )
} \ge \eps n }\le e^{-\delta n},
\end{equation}
and
\begin{equation}\label{giantconc}
 \Pr\Bb{ \bigabs{
 N_\cP(C_1)
  - n\Pr(\cT_D \hbox{ is infinite and has }\cP)
} \ge \eps n }\le e^{-\delta n},
\end{equation}
where $C_1$ is a component of $G$ of maximal order.
\end{theorem}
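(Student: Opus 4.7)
The plan is to derive both~\eqref{totconc1} and~\eqref{giantconc} from results already established in the paper together with one simple combinatorial decomposition. For~\eqref{totconc1} with $G=\Gm_\ds$ there is nothing new to do: this is precisely Theorem~\ref{th_local1}. To pass to $G_\ds$ I would reuse the conditioning argument from the proof of Theorem~\ref{th2} for $G_\ds$: since $G_\ds$ is distributed as $\Gm_\ds$ conditioned on being simple, Lemma~\ref{psimple} absorbs the factor $\Pr(\Gm_\ds\text{ is simple})^{-1}$ into the exponential bound by shrinking $\delta$.

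For~\eqref{giantconc} the idea is to split vertices by whether their component is ``small'' or ``large''. Fix $\eps>0$ and choose a large constant $k$ such that (i)~$\sum_{j\ge k}\rho_j(D)<\eps/10$, possible since $\sum_j\rho_j(D)=1-\rho(D)$, and (ii)~by Lemma~\ref{lNgekD}, $|N_{\ge k}(\Gm_\ds)-\rho(D)n|<\eps n/10$ holds wvhp. Let $\cQ$ be the local property ``the root has $\cP$ and its component has fewer than $k$ vertices''. This is indeed local, since a component of size less than $k$ is contained in the $(k-2)$-ball around any of its vertices. Applying Theorem~\ref{th_local1} to $\cQ$ gives wvhp that $N_\cQ(\Gm_\ds)$ is within $\eps n/10$ of $n\Pr(\cT_D\in\cQ)$, which by~(i) is within a further $\eps n/10$ of $n\Pr(\cT_D\text{ has }\cP\text{ and is finite})$.

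To finish, apply the multigraph case of Theorem~\ref{th2} to get wvhp $|L_1(\Gm_\ds)-\rho(D)n|<\eps n/10$. Assuming $\rho(D)>0$ (otherwise~\eqref{giantconc} reduces to~\eqref{totconc1}), this forces $|C_1|\ge k$ for $n$ large, so $C_1$ itself contributes nothing to $N_\cQ$, and the number of vertices in components of size $\ge k$ other than $C_1$ is at most $N_{\ge k}(G)-|C_1|=O(\eps n)$ by~(ii). Hence wvhp $N_\cP(G\setminus C_1)$ lies within $O(\eps n)$ of $n\Pr(\cT_D\text{ has }\cP\text{ and is finite})$; subtracting from~\eqref{totconc1} for $\cP$ and rescaling $\eps$ gives~\eqref{giantconc} for $\Gm_\ds$, and the simple graph case follows from the same conditioning-on-simplicity device. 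The only real obstacle is the bookkeeping of choosing one $k$ and one $\delta$ that simultaneously control the tail of $\cT_D$, the concentration of $N_{\ge k}$ and of $L_1$, and two instances of Theorem~\ref{th_local1}; no new probabilistic input is required.
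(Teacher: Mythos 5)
Your proposal is correct and follows essentially the same route as the paper: reduce to $\Gm_\ds$ via Lemma~\ref{psimple}, invoke Theorem~\ref{th_local1} for \eqref{totconc1}, and for \eqref{giantconc} split by component size at a threshold $k$ controlled by Lemma~\ref{lNgekD}, using Theorem~\ref{th2} to show that $C_1$ absorbs all but $O(\eps n)$ of the vertices in components of size $\ge k$. The paper counts $N_{\cP\cap\cS_k}$ directly (where $\cS_k$ is ``component of root has $\ge k$ vertices'') and shows it sandwiches $N_\cP(C_1)$, whereas you count the complementary quantity $N_{\cP\setminus\cS_k}$ and subtract from $N_\cP(G)$; these are the same decomposition with different bookkeeping. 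Two small points: the property ``the component of the root has fewer than $k$ vertices'' is $(k-1)$-local rather than $(k-2)$-local (one extra level is needed to certify no further expansion), and your aside that the case $\rho(D)=0$ ``reduces to \eqref{totconc1}'' is not literally accurate --- in that case \eqref{giantconc} asserts $N_\cP(C_1)<\eps n$ wvhp, which one still obtains from Theorem~\ref{th2} (giving $L_1<\eps n$ wvhp) rather than from \eqref{totconc1}; note this needs $\Pr(D\ge 3)>0$, a hypothesis that is implicit in the paper's proof as well.
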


\begin{proof}
As usual, in the light of Lemma~\ref{psimple} we need only consider the case $G=\Gm_\ds$.
In this case, we have proved \eqref{totconc1} already in Theorem~\ref{th_local1}.

Turning to \eqref{giantconc}, let $D\in\cD$, $\eps>0$ and a
local property $\cP$ be given.  Let $\cS_k$ be the rooted-graph property `the
component of the root contains at least $k$ vertices', and $\cS_\infty$ `the
component of the root is infinite'. (We only consider the latter in the
context of $\cT_D$; all our graphs here are finite.)  Then, as $k\to\infty$, the
events $\{\cT_D\in \cS_k\} = \{|\cT_D|\ge k\}$ decrease to the event
$\{\cT_D\in\cS_\infty\} = \{\cT_D\hbox{ is infinite}\}$.  Hence
$\Pr(\cT_D\in\cS_k)\to\Pr(\cT_D\in\cS_\infty)$, and there is a constant $K$
such that for any $k\ge K$ we have
\begin{equation}\label{sksi}
 \Pr(\cT_D \in \cS_k\setminus \cS_\infty) < \eps/10.
\end{equation}
As before, let us say that an event holds `wvhp' if for some $\delta>0$
it holds with probability at least $1-e^{-\delta n}$ whenever
$\dconf(\ds,D)<\delta$.
By Lemma~\ref{lNgekD} there is some $k\ge K$ such that wvhp
\begin{equation}\label{numbig}
 \bigabs{N_{\ge k}(\Gm_\ds) - \rho(D) n} \le\eps n/10.
\end{equation}
Let $N=N_\cP(C_1)$ be the number of vertices we wish to count, i.e., those in
the largest component $C_1$ of $\Gm_\ds$ having property
$\cP$. Let $N'=N_{\cP\cap \cS_k}(\Gm_\ds)$ count vertices with property $\cP$
in components of size at least $k$. Then $N$ and $N'$ differ by at most $N_{\ge k}-L_1$,
which, by \eqref{numbig} and Theorem~\ref{th2}, is wvhp at most $\eps n/5$, say.
Applying \eqref{totconc1} to the local property $\cP\cap\cS_k$,
we deduce that wvhp $N$ is within $\eps n/4$ of $n\Pr(\cT_D\in \cP\cap \cS_k)$.
But by \eqref{sksi} this is within $\eps n/10$ of $n\Pr(\cT_D\in\cP\cap \cS_\infty)$,
establishing \eqref{giantconc}.
\end{proof}

For simple properties $\cP$,
it is easy to give explicit formulae for the probability that $\cT_D$ is infinite
and has property $\cP$. For example, if $\cP=\cP_d$ is the property that the root
has degree $d$, then defining $x_+$ as in Section~\ref{sec_surv}, the proof
of Theorem~\ref{rhoD} shows easily that
\[
 \Pr( \cT_D\hbox{ is infinite and has }\cP_d) = r_d(1-(1-x_+)^d).
\]
This gives an asymptotic formula for the number of degree-$d$ vertices in the giant
component $C_1$ that coincides with that of Janson and Luczak~\cite{JL_new}.

Rather than counting vertices with some local property, what happens if we want to
sum some `local function' $f(G,v)$ over vertices $v\in C_1$? Can we show that
\begin{equation}\label{fclose}
 n^{-1} \sum_{v\in C_1} f(C_1,v) \pto \E(f(\cT_D)) ?
\end{equation}
If $f$ is bounded then the answer is yes: simply express $f$ in terms of
indicator functions of local properties and apply Theorem~\ref{th_local}. In general,
\eqref{fclose} need not hold: for example, if $f(G,v)$ is the square of
the degree of $v$ then, since our assumptions give no control over $\sum_i d_i^2$,
\eqref{fclose} can fail.

Suppose that $f(G,v)$ is the degree of $v$, so $\sum_{v\in C_1} f(C_1,v)$ is
twice the number of edges in the giant component. Then, by \eqref{dconftail},
for any $\eps>0$ there is a $C$ such that if $\dconf(\ds,D)$ is small enough,
then
\[
  \sum_{v\in C_1 : d_{C_1}(v)\ge C} f(C_1,v)
 \le  \sum_{v\in G: d_G(v)\ge C} f(G,v) \le \eps n,
\]
and considering the bounded function obtained by truncating $f$ at $C$,
we see that \eqref{fclose} holds in this case, even though $f$ is unbounded.
A similar argument can be applied to other unbounded $f$, leading to results
concerning, for example, the number edges in the
giant component between vertices of degree~2 and degree~3. We omit the details.

\end{document}